\newcommandx{\unsure}[2][1=]{\todo[linecolor=red,backgroundcolor=red!25,bordercolor=red,#1]{#2}}
\newcommandx{\change}[2][1=]{\todo[linecolor=blue,backgroundcolor=blue!25,bordercolor=blue,#1]{#2}}
\newcommandx{\info}[2][1=]{\todo[linecolor=OliveGreen,backgroundcolor=OliveGreen!25,bordercolor=OliveGreen,#1]{#2}}
\newcommandx{\improvement}[2][1=]{\todo[linecolor=Plum,backgroundcolor=Plum!25,bordercolor=Plum,#1]{#2}}
\newcommandx{\thiswillnotshow}[2][1=]{\todo[disable,#1]{#2}}
\newcommand*{\Scaleeq}[2][4]{\scalebox{#1}{$#2$}}%
\newcommand{\R}{{\mathbb R}}
\newcommand{\norm}[1]{\|#1\|}
\newcommand{\snorm}[1]{\|#1\|^2}
\newcommand{\der}[2]{\frac{d #1}{d #2}}
\newcommand{\Hi}{H^{-1}}
\definecolor{light-gray}{gray}{0.8}
\theoremstyle{plain}
\newtheorem{theorem}{Theorem}
\newtheorem{lemma}[theorem]{Lemma}
\theoremstyle{definition}
\newtheorem{defi}{Definition}
\theoremstyle{remark}
\theoremstyle{remark}
\newtheorem{remark}{Remark}
\newlength\figureheight
\newlength\figurewidth
\journal{Journal of Computational Physics}
\begin{document}

\begin{frontmatter}

	\title{Boundary and interface methods for energy stable finite difference discretizations of the dynamic beam equation}

	\author{Gustav Eriksson\corref{cor1}}
	\author{Jonatan Werpers}
	\author{David Niemelä}
	\author{Niklas Wik}
	\author{Valter Zethrin}
	\author{Ken Mattsson}

	\cortext[cor1]{Corresponding author \\ E-mail address: \texttt{gustav.eriksson@it.uu.se}}
	\address{Department of Information Technology, Uppsala University, PO Box 337, S-751 05 Uppsala, Sweden}

	\begin{abstract}
		We consider energy stable summation by parts finite difference methods (SBP-FD) for the homogeneous and piecewise homogeneous dynamic beam equation (DBE). Previously the constant coefficient problem has been solved with SBP-FD together with penalty terms (SBP-SAT) to impose boundary conditions. In this work we revisit this problem and compare SBP-SAT to the projection method (SBP-P). We also consider the DBE with discontinuous coefficients and present novel SBP-SAT, SBP-P and hybrid SBP-SAT-P discretizations for imposing interface conditions. Numerical experiments show that all methods considered are similar in terms of accuracy, but that SBP-P can be more computationally efficient (less restrictive time step requirement for explicit time integration methods) for both the constant and piecewise constant coefficient problems.
	\end{abstract}

	\begin{keyword}
		dynamic beam equation \sep boundary treatment \sep summation by parts \sep finite differences \sep high order methods
	\end{keyword}
\end{frontmatter}
\section{Introduction}
\label{sec: intro}
The dynamic beam equation (DBE) is a standard beam theory model describing the motion of free vibrations of a Euler-Bernoulli beam. The derivation of the equation dates back to the 18th century but it is still used in engineering applications today. For example, in construction of infrastructure involving beams such as buildings, bridges and railways.

Most of the literature on the DBE focus on solving the constant coefficient problem in the frequency domain. By taking the Laplace transform of \eqref{eq: cont_DBE} one can, for specific boundary conditions and external loads, derive the fundamental frequencies of the beam and thus obtain explicit analytical solutions \cite{LEE2019191,Goncalves2007,STIMACRONCEVIC2019118,YU2018571}. Although this approach is highly efficient when applicable, the analysis becomes significantly more complex when discontinuous beam parameters and general dynamic loads are required. An alternative is to utilize numerical methods. In \cite{Ansari2013} the spatial component of the solution is discretized with compact finite differences and used to numerically derive the natural frequencies of nanobeams. However, this approach assumes that the spatial and temporal parts of the solution function can be separated. In this paper we solve the DBE numerically in the time-domain using the \emph{method of lines}, i.e. we discretize the partial differential equation (PDE) in space and then solve the resulting system of ordinary differential equations (ODE). The spatial discretization is done using high order explicit finite difference methods. This approach is not limited by any specific choice of boundary condition or external loads, and does not assume that the solution takes any particular form (e.g. separate temporal and spatial components).

It has for a long time been known that non-trivial boundary procedures are required to obtain stable high order finite difference discretizations of initial-boundary value problems (IBVP) \cite{DeRango2001,Hesthaven1997,Bayliss1986,Abarbanel1997,Lele1992}. One way to manage this is to combine \emph{summation by parts} (SBP) finite difference operators with either the \emph{simultaneous approximation term} (SAT) method or the \emph{projection} (P) method to impose boundary conditions. With both SBP-SAT or SBP-P a semi-discrete energy estimate that mimics the continuous estimate can be derived, ensuring stability of the ODE system. The SBP-SAT and SBP-P methods have been applied to various PDE:s in the past, see for example \cite{Hicken2012,Mattsson2006, Mattsson2018,Almquist2019,Mattsson2018a}. See also \cite{Mattsson2015,Mattsson2016,Mattsson2014} for examples of the SBP-SAT method applied to IBVP:s involving third and fourth derivatives. In \cite{Mattsson2015} the SBP-SAT method was used to solve the DBE with constant coefficients.

In this paper we begin by deriving stable SBP-P discretizations of the DBE with clamped and free boundary conditions, and compare them to previously presented SBP-SAT discretizations \cite{Mattsson2015}. We then consider the DBE with piecewise constant coefficients, requiring additional interface conditions to couple the solution across the discontinuities. We present novel SBP-SAT, SBP-P and hybrid SBP-SAT-P discretizations for this problem and compare them numerically in terms of accuracy and computational efficiency. Previously SBP-SAT and SBP-P have been used to impose interface conditions for PDE:s with second derivatives in space, see for example \cite{Mattsson2006,Almquist2019,Mattsson2008}. Here we propose a novel third alternative utilizing SAT and projection simultaneously, SBP-SAT-P, where some interface conditions are imposed using SAT and others using projection. The focus of the current work is twofold: a): to develop a stable method for solving the DBE with general parameters, boundary conditions and external loads and b): to compare SBP-SAT, SBP-P and SBP-SAT-P for a problem involving high spatial derivatives (for which it is traditionally difficult to derive energy stable schemes).

This paper is organized as follows: In Section \ref{sec: probdesc} the DBE is introduced. In Section \ref{sec: defs} necessary definitions are presented. In Sections \ref{sec: hom_beam} and \ref{sec: p_hom_beam} the DBE with constant and piecewise constant coefficients respectively is analyzed. In Section \ref{sec: time_int} the time stepping scheme is presented. In Section \ref{sec: num_exp} the methods are compared and the analysis is verified numerically. Conclusions are drawn in Section \ref{sec: concs}.
\section{The dynamic beam equation}
\label{sec: probdesc}
Denote by $w(x,t)$ the beam's deflection from the $x$-axis, the DBE is then given by
\begin{equation}
	\Scaleeq[0.9]{
		\label{eq: cont_DBE}
		\begin{alignedat}{5}
			&\mu(x) \frac{\partial^2 w(x,t)}{\partial t^2} = - \frac{\partial^2}{\partial x^2} \left ( E(x) I(x) \frac{\partial^2 w(x,t)}{\partial x^2}  \right ) + q(x,t), \quad &&x_l \leq x \leq x_r, \quad &&&t > 0, \\
			&w(x,t) = f_1(x), \quad \frac{\partial w(x,t)}{\partial t} = f_2(x),  &&x_l \leq x \leq x_r,  &&&t = 0, \\
			&B_L(w) = \mathbf{G}_L(t), &&x = x_l, &&&t > 0,\\
			&B_R(w) = \mathbf{G}_R(t), &&x = x_r, &&&t > 0,
		\end{alignedat}
	}
\end{equation}
where $E(x)$ is the elastic modulus of the beam, $I(x)$ the second moment of area of the beam's cross-section, $\mu(x)$ the mass per unit length and $q(x,t)$ the external load. The initial data is $f_{1,2}(x)$. The boundary operators $B_{L,R}$ and boundary data $\mathbf{G}_{L,R}(t)$ determine the boundary conditions. For notational clarity, we rewrite the PDE as
\begin{equation}
	\label{eq: cont_DBE_math}
	b(x) u_{tt} = -(a(x) u_{xx})_{xx} + F(x,t),
\end{equation}
where subscripts denote partial differentiation. Note that the functions $a(x) = E(x) I(x)$ and $b(x) = \mu(x)$ are positive. Throughout this paper we will assume that the boundary data $\mathbf{G}_{L,R}(t)$ and forcing function $F(x,t)$ are zero (all methods considered are equally applicable with general non-homogeneous data).
\section{Definitions}
\label{sec: defs}
Let the inner product of two real-valued functions $u,v \in L^2[x_l,x_r]$ be defined by $(u,v) = \int _{x_l} ^{x_r} uv \, dx$ and the corresponding norm by $\snorm{u} = (u,u)$.

The domain is discretized into $m$ equidistant grid points given by
\begin{equation}
	x_i = x_l + (i-1)h, \quad i = 1,2,..., m, \quad h = \frac{x_r - x_l}{m-1}.
\end{equation}
A semi-discrete solution vector is given by $v = [v_1, v_2, ..., v_m]^\top$, where $v_i$ is the approximate solution at grid point $x_i$. Let $(u,v)_H = u^\top H v$, where $H = H^\top > 0$, define a discrete inner product for discrete real-valued vectors $u,v \in \R^m$, and $\snorm{v}_H = v^\top H v$ the corresponding norm. We also define
\begin{equation}
	\bar H = \begin{bmatrix}
		H & 0 \\ 0 & H
	\end{bmatrix}.
\end{equation}
The SBP operators used here are central finite difference operators with boundary closures carefully designed to mimic integration-by-parts in the semi-discrete setting. In this paper we present results for the 2nd, 4th and 6th order accurate fourth-derivative SBP operators presented in \cite{Mattsson2014}, the following definition is central:
\begin{defi}
	\label{def: D4_def}
	A difference operator
	\newline$D_4 = \Hi \left(N + e_l d_{3;l}^\top + d_{1;l} d_{2;l}^\top + e_r d_{3;r}^\top - d_{1;r} d_{2;r}^\top \right)$ approximating $\partial^4/\partial x^4$, using a 2pth-order accurate narrow-stencil in the interior, is said to be a 2pth-order diagonal-norm fourth-derivative SBP operator if $H = H^\top > 0$ is diagonal, $N = N^T \geq 0$, $d_{1;l}^\top v \approx -u_x |^l$, $d_{1;r}^\top v \approx u_x |^r$, $d_{2;l}^\top v \approx -u_{xx} |^l$, $d_{2;r}^\top v \approx u_{xx}|^r$, $d_{3;l}^\top v \approx -u_{xxx} |^l$ and $d_{3;r}^\top v \approx u_{xxx}|^r$ are finite difference approximations of the first, second and third normal derivatives at the left and right boundary points.
\end{defi}
The matrix $N$ can be decomposed in the following way:
\begin{equation}
	\label{eq: N_split}
	N =
	\tilde N
	+ h   \alpha_{II}  \left( d_{2;l}d_{2;l}^\top + d_{2;r}d_{2;r}^\top \right)
	+ h^3 \alpha_{III} \left( d_{3;l}d_{3;l}^\top + d_{3;r}d_{3;r}^\top \right),
\end{equation}
where $\tilde N = \tilde N^\top \geq 0$ and $\alpha_{II}$ and $\alpha_{III}$ are positive constants not dependent on $h$. In Figure \ref{fig: alpha_zones} the values ensuring positive semi-definiteness of $\tilde N$ are presented for the 2nd, 4th and 6th order accurate SBP operators used here.
\begin{figure}[htbp]
	\centering
	\includegraphics[width=\textwidth, clip]{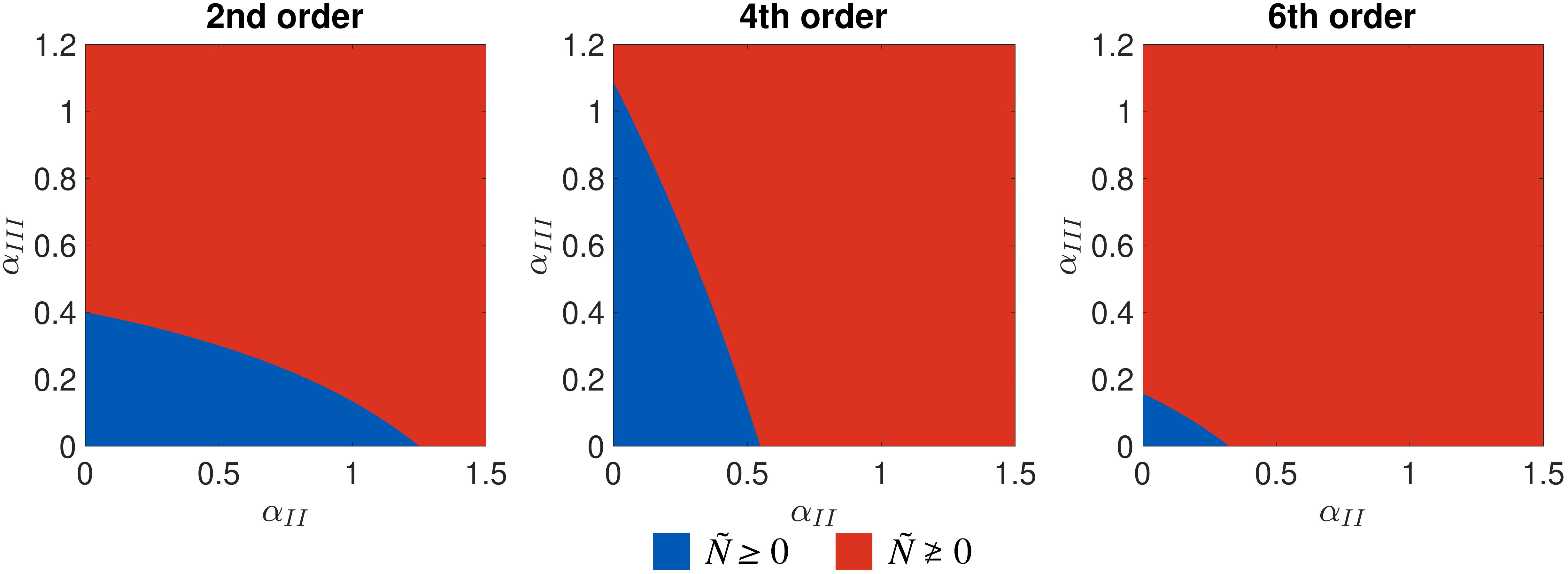}
	\caption{Influence of parameters $\alpha_{II}$ and $\alpha_{III}$ on definiteness of $\tilde N$.}
	\label{fig: alpha_zones}
\end{figure}
\section{Homogeneous beam}
\label{sec: hom_beam}
\subsection{Well-posedness}
We begin by considering a homogeneous beam, i.e. $a(x)$ and $b(x)$ are constant functions. Multiplying \eqref{eq: cont_DBE_math} by $u_t$, integrating over the domain and using integration by parts (the energy method) leads to
\begin{equation}
	\label{eq: cont_en_est}
	\frac{d}{dt} E = 2a[-u_t u_{xxx} + u_{tx} u_{xx} ] _{x_l} ^{x_r},
\end{equation}
where $E$ is an energy defined as
\begin{equation}
	E = b \norm{u_t}^2 + a \snorm{u_{xx}}.
\end{equation}
Assuming existence, well-posedness of \eqref{eq: cont_DBE_math} is achieved by imposing the minimum number of boundary conditions such that the right-hand side in \eqref{eq: cont_en_est} is non-positive with zero boundary data, i.e. the energy for the homogeneous problem is non-increasing. For this problem the minimum number of boundary conditions are two on each boundary \cite{Mattsson2015}. In this paper we consider the following homogeneous boundary conditions:
\begin{equation}
	\label{eq: cont_BC}
	\begin{aligned}
		 & \text{\it{clamped:}} \quad &  & u = 0,  \quad &  &  & u_{x} = 0, \quad &  &  &  & x = x_{l,r}, \\
		 & \text{\it{free:}}          &  & u_{xx} = 0,   &  &  & u_{xxx} = 0,     &  &  &  & x = x_{l,r}.
	\end{aligned}
\end{equation}
Either combination of these lead to energy conservation,
\begin{equation}
	\frac{d}{dt} E = 0.
\end{equation}
\subsection{Spatial discretization}
A semi-discrete SBP discretization of \eqref{eq: cont_DBE_math} with constant $a$ and $b$ is
\begin{equation}
	\label{eq: disc_single_dbe}
	b v_{tt} = -a D_4 v.
\end{equation}
The semi-discrete formulations of the boundary conditions \eqref{eq: cont_BC} are
\begin{equation}
	\label{eq: disc_BC}
	\begin{aligned}
		 & \text{\it{clamped:}} \quad &  & e_{l}^\top v = 0,  \quad   &  &  & d_{1;l}^\top v = 0, \quad &  & e_{r}^\top v = 0,  \quad   &  &  & d_{1;r}^\top v = 0, \\
		 & \text{\it{free:}}          &  & d_{2;l}^\top v = 0,  \quad &  &  & d_{3;l}^\top v = 0, \quad &  & d_{2;r}^\top v = 0,  \quad &  &  & d_{3;r}^\top v = 0.
	\end{aligned}
\end{equation}
To impose the boundary conditions while simultaneously obtaining a stable ODE, we consider two methods: SAT and projection. With SAT the ODE system takes the form
\begin{equation}
	\label{eq: disc_single_SAT}
	b v_{tt} = -a D_4 v + SAT,
\end{equation}
where $SAT = a H^{-1} Bv$ and $B$ is a carefully chosen term so that the boundary conditions are imposed and an energy estimate is obtained, i.e. the eigenvalues of the matrix $-a D_4 + a H^{-1} B$ are real and non-positive. Stable SBP-SAT discretizations of the DBE with clamped and free boundary conditions are derived in \cite{Mattsson2015}. To make the paper self-contained, we also present them here.

The projection method augments the system \eqref{eq: disc_single_dbe} as
\begin{equation}
	\label{eq: disc_single_proj}
	b v_{tt} = -a P D_4 P v,
\end{equation}
where $P$ is a projection from $\mathbb{R}^m$ onto a subspace where the boundary conditions are satisfied. If $P$ is an orthogonal projection with respect to the inner product $(\cdot,\cdot)_H$, i.e.
\begin{equation}
	\label{eq: proj_self_adj}
	(Pu,v)_H = (u,Pv)_H, \quad u,v \in \R^m,
\end{equation}
the energy method can be used to show that the matrix $-a P D_4 P$ only has real and non-positive eigenvalues. For the derivation of $P$ and examples on using the projection method for imposing boundary conditions, see \cite{Mattsson2006,Mattsson2018,Olsson1995a,Olsson1995}.

In Sections \ref{sec: sat_single_clamped}-\ref{sec: proj_single_free} we derive semi-discrete energy estimates for \eqref{eq: disc_single_dbe} with boundary conditions \eqref{eq: disc_BC} imposed using SAT and projection. To make the analysis more readable, we only present boundary terms corresponding to the left boundary. The right boundary terms are done analogously.
\begin{remark}
	In this work we only consider homogeneous boundary conditions. For inhomogeneous boundary conditions additional time dependent data terms are included in \eqref{eq: disc_single_SAT} and \eqref{eq: disc_single_proj}. Boundary and forcing data do not affect stability.
\end{remark}
\subsection{Clamped boundary conditions with SAT}
\label{sec: sat_single_clamped}
SAT for clamped boundary conditions, first presented in Lemma 4.3 in \cite{Mattsson2015}, is given by
\begin{equation}
	\label{eq: disc_SAT_clamped}
	SAT_c = -a \Hi (d_{3;l} + h^{-3} \tau_l e_l) e_l^\top v - a \Hi (d_{2;l} + h^{-1} \sigma_l d_{1;l}) d_{1;l}^\top v,
\end{equation}
where $\tau_{l}$ and $\sigma_{l}$ are constants chosen for stability. For completeness, we restate the Lemma and stability proof here.
\begin{lemma}
	The system \eqref{eq: disc_single_SAT} is a stable approximation of the DBE with clamped boundary conditions if SAT is given by \eqref{eq: disc_SAT_clamped} and
	\begin{equation}
		\tau_{l} = \frac{1}{\alpha_{III}} \quad \text{and} \quad \sigma_{l} = \frac{1}{\alpha_{II}}.
	\end{equation}
\end{lemma}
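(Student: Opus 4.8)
The plan is to use the discrete energy method on the semi-discrete system \eqref{eq: disc_single_SAT} with $SAT = SAT_c$. First I would multiply the ODE from the left by $v_t^\top H$ and exploit the SBP factorization of Definition \ref{def: D4_def}, namely $H D_4 = N + e_l d_{3;l}^\top + d_{1;l} d_{2;l}^\top + e_r d_{3;r}^\top - d_{1;r} d_{2;r}^\top$. Since $H = H^\top$ and $N = N^\top$, the bulk contribution collapses into time derivatives, $b\, v_t^\top H v_{tt} = \tfrac{b}{2}\frac{d}{dt}\snorm{v_t}_H$ and $a\, v_t^\top N v = \tfrac{a}{2}\frac{d}{dt}(v^\top N v)$, leaving only the boundary-derivative dyads $e_l d_{3;l}^\top$ and $d_{1;l}d_{2;l}^\top$ (plus their right-boundary analogues) as genuine boundary terms. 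Restricting attention to the left boundary as in the statement, these produce $-a[(e_l^\top v_t)(d_{3;l}^\top v) + (d_{1;l}^\top v_t)(d_{2;l}^\top v)]$.

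Next I would expand the penalty contribution $v_t^\top H\, SAT_c = -a[(d_{3;l}^\top v_t)(e_l^\top v) + h^{-3}\tau_l (e_l^\top v_t)(e_l^\top v) + (d_{2;l}^\top v_t)(d_{1;l}^\top v) + h^{-1}\sigma_l(d_{1;l}^\top v_t)(d_{1;l}^\top v)]$ and add it to the boundary terms above. The key observation is that the cross terms pair up symmetrically, $(e_l^\top v_t)(d_{3;l}^\top v) + (d_{3;l}^\top v_t)(e_l^\top v) = \frac{d}{dt}[(e_l^\top v)(d_{3;l}^\top v)]$ and likewise for the $d_{1;l},d_{2;l}$ pair, while the penalty squares are already exact derivatives; thus the entire right-hand side becomes a perfect time derivative. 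Collecting everything yields a conserved quantity $\frac{d}{dt}\tilde E = 0$ with $\tilde E = b\snorm{v_t}_H + a\,v^\top \mathcal{M} v$, where the symmetric matrix $\mathcal{M}$ equals $N$ augmented by the symmetrized boundary dyads $e_l d_{3;l}^\top + d_{3;l}e_l^\top$ and $d_{1;l}d_{2;l}^\top + d_{2;l}d_{1;l}^\top$ together with the penalty squares $h^{-3}\tau_l e_l e_l^\top$ and $h^{-1}\sigma_l d_{1;l}d_{1;l}^\top$ (and their right-boundary counterparts).

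Stability then reduces to showing $\mathcal{M}\ge 0$, and this is where the decomposition \eqref{eq: N_split} does the work. Substituting $N = \tilde N + h\alpha_{II}(d_{2;l}d_{2;l}^\top + \dots) + h^3\alpha_{III}(d_{3;l}d_{3;l}^\top + \dots)$ groups the left-boundary part of $v^\top\mathcal{M}v$ into $v^\top\tilde N v$ plus two independent quadratic forms, $h^3\alpha_{III}(d_{3;l}^\top v)^2 + 2(e_l^\top v)(d_{3;l}^\top v) + h^{-3}\tau_l(e_l^\top v)^2$ and $h\alpha_{II}(d_{2;l}^\top v)^2 + 2(d_{1;l}^\top v)(d_{2;l}^\top v) + h^{-1}\sigma_l(d_{1;l}^\top v)^2$. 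Each is a $2\times2$ form whose semi-definiteness is governed by its determinant, $\alpha_{III}\tau_l - 1$ and $\alpha_{II}\sigma_l - 1$ respectively (the powers of $h$ cancel exactly). Choosing $\tau_l = 1/\alpha_{III}$ and $\sigma_l = 1/\alpha_{II}$ makes each determinant vanish, turning the forms into perfect squares, e.g. $(h^{3/2}\sqrt{\alpha_{III}}\,d_{3;l}^\top v + h^{-3/2}\sqrt{\tau_l}\,e_l^\top v)^2 \ge 0$; together with $\tilde N\ge 0$ this gives $\mathcal{M}\ge 0$, so $\tilde E$ is a nonnegative conserved energy and the eigenvalues of $-aD_4 + aH^{-1}B$ are real and nonpositive. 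I expect the main obstacle to be the bookkeeping in the second step: correctly tracking the signs and pairing the SBP boundary dyads with the SAT terms so that they recombine into an exact time derivative, since a single sign slip there would spoil the symmetry of $\mathcal{M}$ and the clean $2\times2$ determinant conditions that pin down $\tau_l$ and $\sigma_l$.
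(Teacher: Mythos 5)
Your proposal is correct and follows essentially the same route as the paper: the discrete energy method combined with the decomposition \eqref{eq: N_split}, reducing stability to the positive semi-definiteness of the boundary quadratic forms in $(e_l^\top v, d_{3;l}^\top v)$ and $(d_{1;l}^\top v, d_{2;l}^\top v)$. The only cosmetic difference is that you complete the square on the two $2\times 2$ forms directly, whereas the paper assembles them into a block-diagonal $4\times 4$ matrix $A$ and invokes Sylvester's criterion -- the resulting conditions on $\tau_l$ and $\sigma_l$ are identical.
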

\begin{proof}
	The energy method applied to \eqref{eq: disc_single_SAT} with SAT given by \eqref{eq: disc_SAT_clamped} together with \eqref{eq: N_split} leads to
	\begin{equation}
		\label{eq: disc_clamp_en_est}
		\der{}{t} E = 0,
	\end{equation}
	where
	\begin{equation}
		E = b \snorm{v_t}_H + a (v^\top \tilde N v + w^\top A w),
	\end{equation}
	\begin{equation}
		w = \begin{bmatrix}
			e_{l}^\top v   \\
			d_{3;l}^\top v \\
			d_{1;l}^\top v \\
			d_{2;l}^\top v
		\end{bmatrix} \quad \text{and} \quad
		A = \begin{bmatrix}
			h^{-3} \tau_{l} & 1                & 0                 & 0             \\
			1               & h^3 \alpha_{III} & 0                 & 0             \\
			0               & 0                & h^{-1} \sigma_{l} & 1             \\
			0               & 0                & 1                 & h \alpha_{II}
		\end{bmatrix}.
	\end{equation}
	For $E$ to be a valid energy it must be positive semi-definite. The matrix $\tilde N$ is positive semi-definite for specific choices of $\alpha_{II}$ and $\alpha_{III}$, see Figure \ref{fig: alpha_zones}. It remains to show which $\tau_l$ and $\sigma_l$ leads to positive semi-definite $A$. By applying Sylvester's criterion we get the conditions
	\begin{equation}
		\tau_{l} = \frac{1}{\alpha_{III}} \quad \text{and} \quad \sigma_{l} = \frac{1}{\alpha_{II}}.
	\end{equation}
\end{proof}
Note that multiple choices of $\alpha_{II}$ and $\alpha_{III}$ lead to stability.
\subsection{Clamped boundary conditions with projection}
\label{sec: proj_single_clamp}
The semi-discrete clamped boundary conditions can be written as $L_c v = 0$ where
\begin{equation}
	L_c = \begin{bmatrix}
		e_l^\top \\
		d_{1;l}^\top
	\end{bmatrix}.
\end{equation}
The projection operator is given by
\begin{equation}
	\label{eq: P_clamp}
	P = I_m - \Hi L_c^\top (L_c \Hi L_c^\top)^{-1} L_c,
\end{equation}
where $I$ is the $m \times m$ identity matrix. The following Lemma is one of the main results in this work:
\begin{lemma}
	The system \eqref{eq: disc_single_proj} is a stable approximation of the DBE with clamped boundary conditions if the projection operator P is given by \eqref{eq: P_clamp}.
\end{lemma}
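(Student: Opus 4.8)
\emph{Proof proposal.}
The plan is to reduce the statement to the general principle recalled around \eqref{eq: proj_self_adj}: once $P$ from \eqref{eq: P_clamp} is shown to be an orthogonal projection with respect to $(\cdot,\cdot)_H$ onto the subspace annihilated by the clamped boundary operator $L_c$, the energy method produces a conserved, positive semi-definite energy and hence stability of \eqref{eq: disc_single_proj}. The work therefore splits into two parts: verifying the algebraic properties of $P$, and then running the energy argument in the same style as the SAT proof above.

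First I would check that $P$ is well defined and is an $H$-orthogonal projection. The defining formula requires $L_c \Hi L_c^\top$ to be invertible; since $H>0$ gives $\Hi>0$ and the rows $e_l^\top, d_{1;l}^\top$ (together with their right-boundary counterparts) are linearly independent, $L_c$ has full row rank and the $H$-Gram matrix $L_c \Hi L_c^\top$ is symmetric positive definite. Writing $M = \Hi L_c^\top (L_c \Hi L_c^\top)^{-1} L_c$ so that $P = I_m - M$, idempotency follows by cancelling the inner factor $(L_c \Hi L_c^\top)^{-1}(L_c \Hi L_c^\top)$, giving $M^2 = M$ and thus $P^2 = P$. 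For $H$-self-adjointness I would compute $HP = H - L_c^\top (L_c \Hi L_c^\top)^{-1} L_c$; as $H$ is diagonal and $(L_c \Hi L_c^\top)^{-1}$ is symmetric, this is a symmetric matrix, so $HP = (HP)^\top = P^\top H$, which is precisely \eqref{eq: proj_self_adj}. Finally $L_c P = L_c - L_c = 0$, so the range of $P$ lies in the kernel of $L_c$, i.e. $Pv$ satisfies the discrete clamped conditions \eqref{eq: disc_BC}.

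With these properties in hand I would apply the energy method to \eqref{eq: disc_single_proj}. Multiplying by $2 v_t^\top H$ and using that $H$ is constant gives $b \der{}{t}\snorm{v_t}_H = -2a\, v_t^\top H P D_4 P v$, and self-adjointness $HP = P^\top H$ rewrites the right-hand side as $-2a\,(Pv_t)^\top H D_4 (Pv)$. Setting $\psi = Pv$ (so that $\psi_t = P v_t$ since $P$ is constant) and expanding $H D_4$ with the SBP identity from Definition \ref{def: D4_def}, the right-hand side becomes $-2a\,\psi_t^\top N \psi$ plus the boundary products $(e_l^\top\psi_t)(d_{3;l}^\top\psi)$, $(d_{1;l}^\top\psi_t)(d_{2;l}^\top\psi)$ and their right-boundary analogues. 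This is the crux: because $L_c \psi = L_c P v = 0$, every factor $e_l^\top\psi,\ d_{1;l}^\top\psi,\ e_r^\top\psi,\ d_{1;r}^\top\psi$ (and likewise with $\psi_t$) vanishes, so all boundary products drop out and only the symmetric interior term survives. Using $N = N^\top$ to write $2\psi_t^\top N \psi = \der{}{t}(\psi^\top N \psi)$, I obtain $\der{}{t} E = 0$ with $E = b\snorm{v_t}_H + a\,(Pv)^\top N (Pv)$.

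It then remains to confirm $E$ is a genuine energy: $H>0$ makes the first term positive semi-definite, and $N = N^\top \geq 0$ by Definition \ref{def: D4_def} makes the second nonnegative, so $E\geq 0$ and is conserved, which is the desired stability estimate (equivalently, $-aPD_4P$ is $H$-self-adjoint and negative semi-definite, hence has real non-positive eigenvalues). I expect the only delicate point to be the bookkeeping in the energy step, namely verifying that the projection annihilates \emph{every} surviving boundary product rather than merely some of them; this is guaranteed precisely because the rows of $L_c$ are the left factors of all SBP boundary terms for clamped conditions. The identities $P^2 = P$ and $HP = P^\top H$ are routine linear algebra once the Gram matrix is known to be invertible.
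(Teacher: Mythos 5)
Your proposal is correct and follows essentially the same route as the paper: apply the energy method to \eqref{eq: disc_single_proj}, use the $H$-self-adjointness of $P$ to pass to the projected vector, expand $HD_4$ via Definition \ref{def: D4_def}, and observe that $L_c Pv = 0$ kills all boundary products, leaving the conserved energy $E = b\snorm{v_t}_H + a\,(Pv)^\top N (Pv)$. The only difference is that you explicitly verify $P^2 = P$, $HP = P^\top H$, and the invertibility of $L_c \Hi L_c^\top$, which the paper takes as given by citing the references on the projection method.
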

\begin{proof}
	The energy method applied to \eqref{eq: disc_single_proj} leads to
	\begin{equation}
		b(v_t,v_{tt})_H = -a (P v_t, D_4 P v)_H,
	\end{equation}
	where the self-adjoint property \eqref{eq: proj_self_adj} is used. Let $\tilde v = P v$ denote the projected solution vector and use Definition \ref{def: D4_def}, we get
	\begin{equation}
		\label{eq: proj_clamped_en_est}
		\der{}{t} E = -2 a (e_l^\top \tilde v_t d_{3;l}^\top \tilde v + d_{1;l}^\top \tilde v_t d_{2;l}^\top \tilde v),
	\end{equation}
	where
	\begin{equation}
		E = b \snorm{v_t}_H + a \tilde v^\top N \tilde v
	\end{equation}
	defines an energy. Since $L_c \tilde v = L_c P v = 0$ we have $e_l^\top \tilde v = d_{1;l}^\top \tilde v = 0$ and thus
	\begin{equation}
		\der{}{t} E = 0.
	\end{equation}
\end{proof}
\subsection{Free boundary conditions with SAT}
\label{sec: sat_single_free}
SAT for free boundary conditions, first presented in Lemma 4.2 in \cite{Mattsson2015}, are given by
\begin{equation}
	\label{eq: disc_SAT_free}
	SAT_f = a \tau_l \Hi d_{1;l} d_{2;l}^\top v + a \sigma_l \Hi e_l d_{3;l}^\top v.
\end{equation}
where $\tau_{l}$ and $\sigma_{l}$ are constants chosen for stability. For completeness, we restate the Lemma and stability proof here.
\begin{lemma}
	The system \eqref{eq: disc_single_SAT} is a stable approximation of the DBE with free boundary conditions if SAT is given by \eqref{eq: disc_SAT_free} and
	\begin{equation}
		\tau_l = \sigma_l = 1.
	\end{equation}
\end{lemma}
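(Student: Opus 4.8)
The plan is to apply the energy method to the SAT system exactly as was done in the clamped case, and then reduce the stability question to the positive semi-definiteness of a small matrix. First I would multiply \eqref{eq: disc_single_SAT} from the left by $v_t^\top H$ and use the decomposition of $D_4$ from Definition \ref{def: D4_def} together with the splitting \eqref{eq: N_split}. The interior term $\tilde N$ contributes the conserved part of the energy, while the boundary-closure pieces $e_l d_{3;l}^\top$, $d_{1;l} d_{2;l}^\top$ and the $\alpha_{II}$, $\alpha_{III}$ rank-one terms in $N$ combine with the SAT contribution $SAT_f$ from \eqref{eq: disc_SAT_free}. I expect the cross terms such as $v_t^\top d_{1;l} d_{2;l}^\top v$ and $v_t^\top e_l d_{3;l}^\top v$ to appear with coefficients involving $\tau_l$ and $\sigma_l$, and my aim is to choose these constants so that the time derivative of the energy collapses to a total derivative of a quadratic boundary form.

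Concretely, I would collect the boundary terms into a structure $\tfrac{d}{dt}(w^\top A w)$ for a suitable boundary vector $w$ built from $e_l^\top v$, $d_{1;l}^\top v$, $d_{2;l}^\top v$, $d_{3;l}^\top v$ (mirroring the clamped proof), and verify that with $\tau_l = \sigma_l = 1$ the remaining non-differentiated cross terms cancel, yielding $\tfrac{d}{dt}E = 0$. The energy should take the form $E = b\snorm{v_t}_H + a\,(v^\top \tilde N v + w^\top A w)$, and positivity of $E$ then follows from $\tilde N = \tilde N^\top \geq 0$ (Figure \ref{fig: alpha_zones}) together with positive semi-definiteness of the $2\times 2$ (or block-diagonal) matrix $A$.

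The main obstacle I anticipate is bookkeeping rather than conceptual: the free conditions impose $d_{2;l}^\top v = d_{3;l}^\top v = 0$ rather than $e_l^\top v = d_{1;l}^\top v = 0$, so the roles of the boundary functionals are swapped relative to the clamped case, and I must be careful that the SAT \eqref{eq: disc_SAT_free} penalizes exactly the quantities $d_{2;l}^\top v$ and $d_{3;l}^\top v$ that are meant to vanish. I would check that the rank-one terms $h\alpha_{II} d_{2;l}d_{2;l}^\top$ and $h^3\alpha_{III} d_{3;l}d_{3;l}^\top$ in the $N$-decomposition, which produced the off-diagonal couplings needed for Sylvester's criterion in the clamped proof, here instead combine with the SAT so that the natural free boundary terms $-u_t u_{xxx}$ and $u_{tx}u_{xx}$ from \eqref{eq: cont_en_est} are mimicked discretely. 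The choice $\tau_l = \sigma_l = 1$ is exactly what makes the discrete boundary flux telescope into a total time derivative, and I expect the final step to be a direct verification of this cancellation followed by an application of Sylvester's criterion to the boundary matrix $A$.
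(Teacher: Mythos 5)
Your argument is correct and would reach the stated conclusion, but you import machinery from the clamped case that the paper deliberately avoids here, and the paper's own proof is simpler. The paper does \emph{not} invoke the splitting \eqref{eq: N_split} for free boundary conditions: applying the energy method with $D_4$ written as in Definition \ref{def: D4_def}, the boundary-closure terms $-a\,v_t^\top(e_l d_{3;l}^\top + d_{1;l}d_{2;l}^\top)v$ from $-aD_4$ combine with the SAT \eqref{eq: disc_SAT_free} to leave exactly
\begin{equation}
	\der{}{t} E = 2a(\tau_l - 1)\, d_{1;l}^\top v_t\, d_{2;l}^\top v + 2a(\sigma_l - 1)\, e_l^\top v_t\, d_{3;l}^\top v,
	\qquad E = b \snorm{v_t}_H + a\, v^\top N v,
\end{equation}
so the choice $\tau_l = \sigma_l = 1$ kills the residual terms outright; there is no quadratic boundary form $w^\top A w$ to telescope and no Sylvester step, since positivity of $E$ follows immediately from $N = N^\top \geq 0$ in Definition \ref{def: D4_def}. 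Your route --- splitting $N$ into $\tilde N$ plus the $\alpha_{II}$, $\alpha_{III}$ rank-one pieces and checking a boundary matrix $A$ --- still works (the resulting $A$ is diagonal and trivially positive semi-definite once the cross terms cancel), but it makes the result look contingent on admissible choices of $\alpha_{II}$ and $\alpha_{III}$ from Figure \ref{fig: alpha_zones}, which is a genuine requirement only for the clamped-SAT and interface-SAT lemmas. The structural point worth internalizing is that the free-boundary SAT is the ``natural'' penalty: it cancels the boundary terms of $D_4$ directly, which is precisely why no parameter tuning beyond $\tau_l = \sigma_l = 1$ is needed.
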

\begin{proof}
	The energy method applied to \eqref{eq: disc_single_SAT} with SAT given by \eqref{eq: disc_SAT_free} leads to
	\begin{equation}
		\der{}{t} E = 2a(\tau_l - 1) d_{1;l}^\top v_t d_{2;l}^\top v + 2a(\sigma_l - 1) e_l^\top v_t d_{3;l}^\top v,
	\end{equation}
	where
	\begin{equation}
		E = b \snorm{v_t}_H + a v^\top N v
	\end{equation}
	defines an energy. The choice $\tau_l = \sigma_l = 1$ results in
	\begin{equation}
		\der{}{t} E = 0.
	\end{equation}
\end{proof}
\subsection{Free boundary conditions with projection}
\label{sec: proj_single_free}
The semi-discrete free boundary conditions can be written as $L_f v = 0$ where
\begin{equation}
	L_f = \begin{bmatrix}
		d_{2;l}^\top \\
		d_{3;l}^\top
	\end{bmatrix}.
\end{equation}
The projection operator is given by
\begin{equation}
	\label{eq: P_free}
	P = I_m - \Hi L_f^\top (L_f \Hi L_f^\top)^{-1} L_f,
\end{equation}
where $I$ is the $m \times m$ identity matrix. The following Lemma is one of the main results in this work:
\begin{lemma}
	The system \eqref{eq: disc_single_proj} is a stable approximation of the DBE with free boundary conditions if the projection operator P is given by \eqref{eq: P_free}.
\end{lemma}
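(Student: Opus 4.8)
The plan is to mirror the projection argument for the clamped case in Section \ref{sec: proj_single_clamp} essentially verbatim, the only substantive change being which pair of boundary quantities the projection annihilates. First I would confirm that the operator $P$ in \eqref{eq: P_free} is a genuine orthogonal projection with respect to $(\cdot,\cdot)_H$, i.e. that it is idempotent and satisfies the self-adjointness relation \eqref{eq: proj_self_adj}. Both properties follow directly from the explicit form $P = I_m - \Hi L_f^\top (L_f \Hi L_f^\top)^{-1} L_f$ once $L_f \Hi L_f^\top$ is known to be invertible, and they are precisely what the energy method applied to \eqref{eq: disc_single_proj} requires.

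Next I would run the energy method on \eqref{eq: disc_single_proj} exactly as in the clamped proof: take the $H$-inner product of the semi-discrete system with $v_t$, use \eqref{eq: proj_self_adj} to transfer one copy of $P$ onto $v_t$, and introduce the projected vector $\tilde v = P v$ (noting $P v_t = \tilde v_t$ since $P$ is time-independent). Invoking the SBP factorization of $D_4$ from Definition \ref{def: D4_def} then yields the same energy relation as in the clamped case, cf. \eqref{eq: proj_clamped_en_est}, namely $\der{}{t} E = -2 a (e_l^\top \tilde v_t\, d_{3;l}^\top \tilde v + d_{1;l}^\top \tilde v_t\, d_{2;l}^\top \tilde v)$ together with the analogous right-boundary terms, where $E = b \snorm{v_t}_H + a \tilde v^\top N \tilde v$. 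Positivity of $E$ rests only on $H = H^\top > 0$ and $N = N^\top \geq 0$, so $E$ is a valid energy independently of the choice of boundary conditions.

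The decisive step, and the only place where the free case diverges from the clamped one, is the final cancellation. Here $\tilde v$ satisfies $L_f \tilde v = L_f P v = 0$, which gives $d_{2;l}^\top \tilde v = 0$ and $d_{3;l}^\top \tilde v = 0$ in place of $e_l^\top \tilde v = d_{1;l}^\top \tilde v = 0$. Since each of the two boundary products in the energy rate pairs a low-order factor ($e_l^\top \tilde v_t$ or $d_{1;l}^\top \tilde v_t$) with a high-order factor ($d_{3;l}^\top \tilde v$ or $d_{2;l}^\top \tilde v$), vanishing of the high-order factors annihilates both products just as effectively as vanishing of the low-order factors did for the clamped conditions. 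Hence $\der{}{t} E = 0$, the scheme conserves energy, and stability follows.

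I expect the only non-routine ingredient to be the well-posedness of the projection itself, i.e. the invertibility of $L_f \Hi L_f^\top$. This amounts to linear independence of the boundary functionals $d_{2;l}, d_{3;l}, d_{2;r}, d_{3;r}$ assembled in $L_f$, which is a structural property of the SBP operators of Definition \ref{def: D4_def} rather than something the energy argument itself supplies; everything downstream of it is a transcription of the clamped computation with the free-boundary rows substituted for the clamped ones.
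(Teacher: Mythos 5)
Your argument is correct and follows essentially the same route as the paper: apply the energy method to \eqref{eq: disc_single_proj}, use the self-adjointness \eqref{eq: proj_self_adj} and Definition \ref{def: D4_def} to arrive at the boundary-term identity matching \eqref{eq: proj_free_en_est}, and observe that $L_f \tilde v = 0$ kills the high-order factors $d_{2;l}^\top \tilde v$ and $d_{3;l}^\top \tilde v$ so that $\der{}{t} E = 0$. Your additional remarks on idempotency, self-adjointness, and the invertibility of $L_f \Hi L_f^\top$ are points the paper leaves implicit but do not change the substance of the proof.
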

\begin{proof}
	The energy method applied to \eqref{eq: disc_single_proj} leads to
	\begin{equation}
		b(v_t,v_{tt})_H = -a (P v_t, D_4 P v)_H,
	\end{equation}
	where the self-adjoint property \eqref{eq: proj_self_adj} is used. Let $\tilde v = P v$ denote the projected solution vector and use Definition \ref{def: D4_def}, we get
	\begin{equation}
		\label{eq: proj_free_en_est}
		\der{}{t} E = -2 a (e_l^\top \tilde v_t d_{3;l}^\top \tilde v + d_{1;l}^\top \tilde v_t d_{2;l}^\top \tilde v),
	\end{equation}
	where
	\begin{equation}
		E = b \snorm{v_t}_H + a \tilde v^\top N \tilde v,
	\end{equation}
	defines an energy. Since $L_f \tilde v = L_f P v = 0$ we have $d_{2;l}^\top \tilde v = d_{3;l}^\top \tilde v = 0$ and thus
	\begin{equation}
		\der{}{t} E = 0.
	\end{equation}
\end{proof}
\section{Piecewise homogeneous beam}
In this section we turn to the DBE with piecewise constant coefficients. In the analysis, we assume that $x_l < 0$ and $x_r > 0$ and consider \eqref{eq: cont_DBE_math} with $a(x)$ and $b(x)$ piecewise constant with a discontinuity at $x = 0$, i.e.
\begin{equation}
	\label{eq: discont_a_b}
	a(x)=
	\begin{cases}
		a^{(1)}, & \text{if } x < 0 \\
		a^{(2)}, & \text{if } x > 0 \\
	\end{cases}, \quad
	b(x)=
	\begin{cases}
		b^{(1)}, & \text{if } x < 0 \\
		b^{(2)}, & \text{if } x > 0 \\
	\end{cases},
\end{equation}
where $a^{(1,2)}$ and $b^{(1,2)}$ are positive constants. We begin by deriving well-posed interface conditions in the continuous setting, we then discretize the equations in space and derive stable semi-discrete formulations with the interface conditions imposed using SAT, projection and hybrid SAT and projection.
\label{sec: p_hom_beam}
\subsection{Well-posedness}
The energy method applied to \eqref{eq: cont_DBE_math} with $a(x)$ and $b(x)$ given by \eqref{eq: discont_a_b} leads to
\begin{equation}
	\frac{d}{dt} E^{(a,b)} = a^{(1)} [u_{tx} u_{xx} - u_t u_{xxx}]_{x_l} ^{0^-} + a^{(2)} [u_{tx} u_{xx} - u_t u_{xxx}]_{0^+} ^{x_r},
\end{equation}
where
\begin{equation}
	E^{(a,b)} = \int _{x_l} ^{0^-} b^{(1)} u_t^2 + a^{(1)} u_{xx}^2 \: dx + \int _{0^+} ^{x_r} b^{(2)} u_t^2 + a^{(2)} u_{xx}^2 \: dx,
\end{equation}
defines an energy. To obtain an energy estimate we split $u(x,t)$ at the interface such that
\begin{equation}
	u(x,t){} = \begin{cases}
		u^{(1)}(x,t), & \text{if } x < 0 \\
		u^{(2)}(x,t), & \text{if } x > 0 \\
	\end{cases},
\end{equation}
and rewrite \eqref{eq: cont_DBE_math} as a system of PDEs:
\begin{equation}
	\label{eq: cont_piece_system}
	\begin{aligned}
		b^{(1)} u^{(1)}_{tt} & = -a^{(1)} u^{(1)}_{xxxx}, \quad &  & x_l \leq x \leq 0, &  & t > 0, \\
		b^{(2)} u^{(2)}_{tt} & = -a^{(2)} u^{(2)}_{xxxx},       &  & 0 \leq x \leq x_r, &  & t > 0.
	\end{aligned}
\end{equation}
The energy estimate becomes
\begin{equation}
	\label{eq: cont_system_enest}
	\frac{d}{dt} E^{(a,b)} = a^{(1)} [u^{(1)}_{tx} u^{(1)}_{xx} - u^{(1)}_t u^{(1)}_{xxx}]_{x_l} ^{0} + a^{(2)} [u^{(2)}_{tx} u^{(2)}_{xx} - u^{(2)}_t u^{(2)}_{xxx}]_0 ^{x_r},
\end{equation}
where
\begin{equation}
	\label{eq: cont_distcont_energy}
	E^{(a,b)} = b^{(1)} \norm{u^{(1)}_t}^2 + b^{(2)} \norm{u^{(2)}_t}^2 + a^{(1)} \snorm{u^{(1)}_{xx}} + a^{(2)} \snorm{u^{(2)}_{xx}}.
\end{equation}
In addition to outer boundary conditions, the following \emph{interface conditions} will bound the energy:
\begin{equation}
	\label{eq: cont_int_cond}
	\begin{aligned}
		u^{(1)}               & =        u^{(2)},        & x & = 0, \\
		u_x^{(1)}             & = u_x^{(2)},             & x & = 0, \\
		a^{(1)} u_{xx}^{(1)}  & = a^{(2)} u_{xx}^{(2)},  & x & = 0, \\
		a^{(1)} u_{xxx}^{(1)} & = a^{(2)} u_{xxx}^{(2)}, & x & = 0. \\
	\end{aligned}
\end{equation}
With either boundary condition in (\ref{eq: cont_BC}) and the interface conditions (\ref{eq: cont_int_cond}) the energy given by (\ref{eq: cont_distcont_energy}) is conserved.
\subsection{Spatial discretization} 
Let $v^{(1)}$ and $v^{(2)}$ denote solution vectors on the left and right side of the discontinuity respectively. For notational clarity we assume that the same SBP operators are used in both blocks. A consistent semi-discrete formulation of \eqref{eq: cont_piece_system} is given by
\begin{equation}
	\label{eq: disc_system_ode}
	\begin{aligned}
		(B \otimes I_m) w_{tt} = -(A \otimes D_4) w,
	\end{aligned}
\end{equation}
where
\begin{equation}
	\nonumber
	w = \begin{bmatrix}
		v^{(1)} \\ v^{(2)}
	\end{bmatrix}, \quad
	B = \begin{bmatrix}
		b^{(1)} & 0 \\ 0 & b^{(2)}
	\end{bmatrix}, \quad
	A = \begin{bmatrix}
		a^{(1)} & 0 \\ 0 & a^{(2)}
	\end{bmatrix},
\end{equation}
and $\otimes$ denotes the Kronecker product. The semi-discrete formulation of the interface conditions \eqref{eq: cont_int_cond} is
\begin{equation}
	\label{eq: disc_inter_conds}
	\begin{aligned}
		e_r^\top v^{(1)} - e_l^\top v^{(2)}                         & = 0            , \\
		d_{1;r}^\top v^{(1)} + d_{1;l}^\top v^{(2)}                 & = 0,             \\
		a^{(1)} d_{2;r}^\top v^{(1)} + a^{(2)} d_{2;l}^\top v^{(2)} & = 0,             \\
		a^{(1)} d_{3;r}^\top v^{(1)} + a^{(2)} d_{3;l}^\top v^{(2)} & = 0.             \\
	\end{aligned}
\end{equation}
In the analysis of the semi-discrete multiblock problem we assume that outer boundary conditions are imposed correctly (for example by using any of the methods discussed in Sections \ref{sec: sat_single_clamped}-\ref{sec: proj_single_free}). Therefore, the terms corresponding to outer boundaries are not included in the equations presented.

As for the single-block problem the semi-discrete interface conditions \eqref{eq: disc_inter_conds} should be imposed such that an energy estimate is obtained. With SBP-SAT additional terms are added to \eqref{eq: disc_system_ode}. With SBP-P the system \eqref{eq: disc_system_ode} is augmented similarly as \eqref{eq: disc_single_proj}, using a projection onto the subspace of $\mathbb{R}^{2m}$ where the interface conditions \eqref{eq: disc_inter_conds} are fulfilled. Since the dual-block projection operator acts on the full solution vector $w$, the self-adjoint property becomes
\begin{equation}
	\label{eq: proj_self_adj_system}
	(Pu,v)_{\bar H} = (u,Pv)_{\bar H}, \quad u,v \in \R^{2m}.
\end{equation}
With SBP-SAT-P both SAT and projection are used to modify the system simultaneously. In Sections \ref{sec: proj_multiblock_sat}-\ref{sec: proj_multiblock_hybrid} semi-discrete formulations and stability proofs of each of the three methods are presented.
\subsection{Interface conditions with SAT}
\label{sec: proj_multiblock_sat}
A consistent semi-discrete approximation of \eqref{eq: cont_piece_system} with the interface conditions \eqref{eq: cont_int_cond} imposed using the SAT method is given by
\begin{equation}
	\label{eq: disc_system_ode_sat}
	(B \otimes I_m) w_{tt} = -(A \otimes D_4) w + SAT,
\end{equation}
where $SAT = SAT_1 + SAT_2 + SAT_3 + SAT_4$,
\begin{equation}
	\begin{alignedat}{1}
		SAT_1 &= \begin{bmatrix}
			- \Hi (\frac{\tau}{h^3} e_r + \frac{a^{(1)}}{2} d_{3;r}) e_r^\top & \Hi (\frac{\tau}{h^3} e_r + \frac{a^{(1)}}{2} d_{3;r}) e_l^\top  \\
			\Hi (\frac{\tau}{h^3} e_l + \frac{a^{(2)}}{2} d_{3;l}) e_r^\top   & -\Hi (\frac{\tau}{h^3} e_l + \frac{a^{(2)}}{2} d_{3;l}) e_l^\top
		\end{bmatrix} w, \\
		SAT_2 &= \begin{bmatrix}
			- \Hi (\frac{\sigma}{h} d_{1;r} - \frac{a^{(1)}}{2} d_{2;r}) d_{1;r}^\top & - \Hi (\frac{\sigma}{h} d_{1;r} - \frac{a^{(1)}}{2} d_{2;r}) d_{1;l}^\top \\
			- \Hi (\frac{\sigma}{h} d_{1;l} + \frac{a^{(2)}}{2} d_{2;l}) d_{1;r}^\top & - \Hi (\frac{\sigma}{h} d_{1;l} + \frac{a^{(2)}}{2} d_{2;l}) d_{1;l}^\top
		\end{bmatrix} w, \\
		SAT_3 &= \begin{bmatrix}
			- \frac{a^{(1)}}{2} \Hi d_{1;r} d_{2;r}^\top & - \frac{a^{(2)}}{2} \Hi d_{1;r} d_{2;l}^\top \\
			\frac{a^{(1)}}{2} \Hi d_{1;l} d_{2;r}^\top   & \frac{a^{(2)}}{2} \Hi d_{1;l} d_{2;l}^\top
		\end{bmatrix} w, \\
		SAT_4 &= \begin{bmatrix}
			\frac{a^{(1)}}{2} \Hi e_r d_{3;r}^\top & \frac{a^{(2)}}{2} \Hi e_r d_{3;l}^\top \\
			\frac{a^{(1)}}{2} \Hi e_l d_{3;r}^\top & \frac{a^{(2)}}{2} \Hi e_l d_{3;l}^\top
		\end{bmatrix} w,
	\end{alignedat}
\end{equation}
and $\tau$ and $\sigma$ are parameters tuned for stability. The ansatz for the SAT in \eqref{eq: disc_system_ode_sat} is similar in structure to the SAT presented in \cite{Mattsson2008} for imposing interface conditions for the second order wave equation. The following Lemma is one of the main results of this paper:
\begin{lemma}
	The system \eqref{eq: disc_system_ode_sat} is a stable approximation of the DBE with piecewise constant parameters if
	\begin{equation}
		\tau = \frac{1}{4\alpha_{III}} (a^{(1)} + a^{(2)}), \quad \sigma = \frac{1}{4\alpha_{II}} (a^{(1)} + a^{(2)}),
	\end{equation}
	and $\alpha_{II}$ and $\alpha_{III}$ are chosen so that $\tilde N$ is positive semi-definite.
\end{lemma}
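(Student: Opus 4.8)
The plan is to mimic the single-block SAT proofs in Sections \ref{sec: sat_single_clamped} and \ref{sec: sat_single_free}, now applied to the coupled system \eqref{eq: disc_system_ode_sat}. First I would apply the energy method by multiplying \eqref{eq: disc_system_ode_sat} from the left by $w_t^\top \bar H$, with $\bar H = I_2 \otimes H$. Since $(I_2 \otimes H)(B \otimes I_m) = B \otimes H$, the left-hand side becomes $\tfrac12 \der{}{t}\, w_t^\top (B \otimes H) w_t$, the kinetic part of the energy. The term $-w_t^\top (A \otimes H D_4) w$ is expanded using the SBP factorization of Definition \ref{def: D4_def}: because $N$ is symmetric and $A$ diagonal, the interior part contributes $-\tfrac12 \der{}{t}\, w^\top (A \otimes N) w$, the potential part of the energy, while the rank-one boundary factors acting at the interface, namely $e_r d_{3;r}^\top - d_{1;r} d_{2;r}^\top$ from block one and $e_l d_{3;l}^\top + d_{1;l} d_{2;l}^\top$ from block two, produce a discrete interface flux, the semi-discrete analogue of the right-hand side of \eqref{eq: cont_system_enest}.

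Next I would add the four SAT contributions $SAT_1,\dots,SAT_4$, each multiplied by $w_t^\top \bar H$, so that the $\Hi$ factors cancel against $H$ and only inner products of interface quantities evaluated on $w_t$ and on $w$ remain. The SAT is designed so that every such term is either \emph{antisymmetric}, cancelling the interface flux from $D_4$ exactly as the interface conditions \eqref{eq: disc_inter_conds} force cancellation in the continuous estimate, or \emph{symmetric}, in which case it equals a total time derivative $\tfrac12 \der{}{t}(\cdot)$. Grouping all such total derivatives together with $-\tfrac12 \der{}{t}\, w^\top (A \otimes N) w$ and absorbing them into the energy leaves $\der{}{t} E = 0$, where $E = w_t^\top (B \otimes H) w_t$ plus a quadratic form in the interface data. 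This is the same conservation mechanism that drives the clamped and free single-block lemmas.

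The remaining work, and the main obstacle, is to show that $E$ is a genuine positive semi-definite energy, which is what pins down $\tau$ and $\sigma$. Using the splitting \eqref{eq: N_split} on both blocks, the $\alpha_{II}$ and $\alpha_{III}$ terms of $w^\top (A \otimes N) w$ isolate the interface derivatives $d_{2;r}^\top v^{(1)}, d_{3;r}^\top v^{(1)}$ and $d_{2;l}^\top v^{(2)}, d_{3;l}^\top v^{(2)}$, weighted respectively by $a^{(1)} h^3 \alpha_{III}$, $a^{(2)} h^3 \alpha_{III}$, and the analogous $h\alpha_{II}$ weights, leaving $\tilde N \ge 0$ from the chosen $\alpha_{II}, \alpha_{III}$. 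I expect the residual interface quadratic form to decouple into two pieces: one built from the clamped-type quantities $e_r^\top v^{(1)}, e_l^\top v^{(2)}, d_{3;r}^\top v^{(1)}, d_{3;l}^\top v^{(2)}$ and controlled by $\tau$ and $\alpha_{III}$ (conditions one and four of \eqref{eq: disc_inter_conds}, carried by $SAT_1$ and $SAT_4$), and one from $d_{1;r}^\top v^{(1)}, d_{1;l}^\top v^{(2)}, d_{2;r}^\top v^{(1)}, d_{2;l}^\top v^{(2)}$ and controlled by $\sigma$ and $\alpha_{II}$ (conditions two and three, carried by $SAT_2$ and $SAT_3$). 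Each piece is a small symmetric matrix, and applying Sylvester's criterion to each, exactly as in the clamped lemma, should yield the threshold conditions. The factor $\tfrac14(a^{(1)}+a^{(2)})$, in contrast to the single-block value $1/\alpha_{III}$, arises from the symmetric averaging of the two blocks' $\tfrac{a^{(i)}}{2}$ flux weights; verifying that this symmetrized matrix is positive semi-definite precisely when $\tau = \tfrac{1}{4\alpha_{III}}(a^{(1)}+a^{(2)})$ and $\sigma = \tfrac{1}{4\alpha_{II}}(a^{(1)}+a^{(2)})$ is the crux of the argument.
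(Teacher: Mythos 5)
Your proposal follows essentially the same route as the paper's proof: energy method in the $\bar H$ inner product, the splitting \eqref{eq: N_split} to peel off the interface contributions of $N$, and a decomposition of the residual interface terms into exactly the two $4\times 4$ quadratic forms you describe (one in $e_r^\top v^{(1)}, d_{3;r}^\top v^{(1)}, e_l^\top v^{(2)}, d_{3;l}^\top v^{(2)}$ governed by $\tau$ and $\alpha_{III}$, one in $d_{1;r}^\top v^{(1)}, d_{2;r}^\top v^{(1)}, d_{1;l}^\top v^{(2)}, d_{2;l}^\top v^{(2)}$ governed by $\sigma$ and $\alpha_{II}$), with Sylvester's criterion yielding the stated values. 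The only remaining work is the explicit Sylvester computation on those two matrices, which the paper likewise leaves to the reader.
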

\begin{proof}
	The energy method applied to \eqref{eq: disc_system_ode_sat} together with \eqref{eq: N_split} and Definition \ref{def: D4_def} leads to
	\begin{equation}
		\label{eq: disc_system_sat_enest1}
		\der{}{t} E = 0,
	\end{equation}
	where
	\begin{equation}
		\begin{alignedat}{1}
			E &= b^{(1)} \snorm{v^{(1)}_t}_H + b^{(2)} \snorm{v^{(2)}_t}_H + a^{(1)} ( v^{(1)})^\top \tilde N v^{(1)} + a^{(2)} ( v^{(2)})^\top \tilde N v^{(2)} \\
			&+ (w^{(1)})^\top A_1 w^{(1)} + (w^{(2)})^\top A_2 w^{(2)},
		\end{alignedat}
	\end{equation}
	and
	\begin{equation}
		\begin{alignedat}{3}
			w_1 &= \begin{bmatrix}
				e_r^\top v^{(1)}     \\
				d_{3;r}^\top v^{(1)} \\
				e_l^\top v^{(2)}     \\
				d_{3;l}^\top v^{(2)}
			\end{bmatrix}, \quad
			A_1 &&= \begin{bmatrix}
				\frac{\tau}{h^3}   & \frac{a^{(1)}}{2}        & \frac{-\tau}{h^3}  & \frac{-a^{(2)}}{2}       \\
				\frac{a^{(1)}}{2}  & a^{(1)} h^3 \alpha_{III} & \frac{-a^{(1)}}{2} & 0                        \\
				\frac{-\tau}{h^3}  & \frac{-a^{(1)}}{2}       & \frac{\tau}{h^3}   & \frac{a^{(2)}}{2}        \\
				\frac{-a^{(2)}}{2} & 0                        & \frac{a^{(2)}}{2}  & a^{(2)} h^3 \alpha_{III}
			\end{bmatrix}, \\
			w_2 &= \begin{bmatrix}
				d_{1;r}^\top v^{(1)} \\
				d_{2;r}^\top v^{(1)} \\
				d_{1;l}^\top v^{(2)} \\
				d_{2;l}^\top v^{(2)}
			\end{bmatrix}, \quad
			A_2 &&= \begin{bmatrix}
				\frac{\sigma}{h}   & \frac{-a^{(1)}}{2}    & \frac{\sigma}{h}   & \frac{a^{(2)}}{2}     \\
				\frac{-a^{(1)}}{2} & a^{(1)} h \alpha_{II} & \frac{-a^{(1)}}{2} & 0                     \\
				\frac{\sigma}{h}   & \frac{-a^{(1)}}{2}    & \frac{\sigma}{h}   & \frac{a^{(2)}}{2}     \\
				\frac{a^{(2)}}{2}  & 0                     & \frac{a^{(2)}}{2}  & a^{(2)} h \alpha_{II}
			\end{bmatrix}.
		\end{alignedat}
	\end{equation}
	For $E$ to be a valid energy it must be positive semi-definite. The matrix $\tilde N$ is positive semi-definite for specific choices of $\alpha_{II}$ and $\alpha_{III}$, see Figure \ref{fig: alpha_zones}. It remains to show which $\tau$ and $\sigma$ leads to positive semi-definite $A_1$ and $A_2$. By Sylvester's criterion we get the conditions
	\begin{equation}
		\label{eq: disc_multiblock_sat_params}
		\tau = \frac{1}{4\alpha_{III}} (a^{(1)} + a^{(2)}) \quad \text{and} \quad \sigma = \frac{1}{4\alpha_{II}} (a^{(1)} + a^{(2)}).
	\end{equation}
\end{proof}
Note that multiple choices of $\alpha_{II}$ and $\alpha_{III}$ lead to stability.
\subsection{Interface conditions with projection}
\label{sec: proj_multiblock_proj}
A consistent semi-discrete approximation of \eqref{eq: cont_piece_system} with the interface conditions \eqref{eq: cont_int_cond} imposed using the projection method is given by
\begin{equation}
	\label{eq: disc_system_ode_proj}
	(B \otimes I_m) w_{tt} = -P (A \otimes D_4) P w,
\end{equation}
where
\begin{equation}
	\label{eq: disc_syst_P}
	P = I_{2m} - \bar H^{-1} L^\top (L \bar H^{-1} L^\top)^{-1} L,
\end{equation}
$I_{2m}$ is the $2m \times 2m$ identity matrix and $L$ is given by
\begin{equation}
	L = \begin{bmatrix}
		e_r^\top             & - e_l^\top           \\
		d_{1;r}^\top         & d_{1;l}^\top         \\
		a^{(1)} d_{2;r}^\top & a^{(2)} d_{2;l}^\top \\
		a^{(1)} d_{3;r}^\top & a^{(2)} d_{3;l}^\top
	\end{bmatrix}.
\end{equation}
Note that $L w = 0$ defines the interface conditions \eqref{eq: disc_inter_conds}. The following Lemma is one of the main results of this paper:
\begin{lemma}
	The system \eqref{eq: disc_system_ode_proj} is a stable approximation of the DBE with piecewise constant parameters.
\end{lemma}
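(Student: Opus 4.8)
The plan is to mirror the single-block projection arguments of Sections \ref{sec: proj_single_clamp} and \ref{sec: proj_single_free}, now carrying the block-diagonal coefficient matrices $A$ and $B$ and the norm $\bar H$ through the energy method. First I would left-multiply \eqref{eq: disc_system_ode_proj} by $w_t^\top \bar H$. Since $B$ is diagonal, $\bar H (B \otimes I_m) = B \otimes H$ is symmetric positive definite, so the left-hand side collapses to $\frac{1}{2}\der{}{t}\left(w_t^\top (B\otimes H) w_t\right)$, the time derivative of the block-weighted kinetic energy. On the right, the key structural tool is the self-adjoint property \eqref{eq: proj_self_adj_system}, equivalent to $\bar H P = P^\top \bar H$, which lets me transfer one projection from the right factor onto $w_t$; writing $\tilde w = Pw$ (so that $\tilde w_t = P w_t$, as $P$ is time-independent) the right-hand side becomes $-\tilde w_t^\top \bar H (A\otimes D_4)\tilde w$.

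Next I would expand $D_4$ via Definition \ref{def: D4_def}. Because $A$ is diagonal, $\bar H(A\otimes D_4) = A\otimes (H D_4)$, and $H D_4 = N + e_l d_{3;l}^\top + d_{1;l}d_{2;l}^\top + e_r d_{3;r}^\top - d_{1;r}d_{2;r}^\top$. The symmetric part $A\otimes N$ produces $-\frac{1}{2}\der{}{t}\left(\tilde w^\top(A\otimes N)\tilde w\right)$, which combines with the kinetic term to define the candidate energy
\begin{equation}
	E = b^{(1)}\snorm{v^{(1)}_t}_H + b^{(2)}\snorm{v^{(2)}_t}_H + a^{(1)}(\tilde v^{(1)})^\top N \tilde v^{(1)} + a^{(2)}(\tilde v^{(2)})^\top N \tilde v^{(2)}.
\end{equation}
Crucially, unlike the SAT analysis I would \emph{not} need the splitting \eqref{eq: N_split} or any parameter tuning: since $N = N^\top \geq 0$ and $A,B,H$ are positive, $E$ is immediately positive semi-definite, so it is a valid energy. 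This qualitative advantage of the projection formulation is worth making explicit in the proof.

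The remaining work is to show the rank-one boundary terms vanish. Since $LP = 0$ for the projector \eqref{eq: disc_syst_P}, the projected vector satisfies $L\tilde w = 0$ and, differentiating in time, $L\tilde w_t = 0$; that is, all four interface conditions \eqref{eq: disc_inter_conds} hold exactly for both $\tilde w$ and $\tilde w_t$. Discarding the outer-boundary contributions (assumed handled separately), the surviving terms live at the interface: the $e_r, d_{1;r}, d_{2;r}, d_{3;r}$ pieces of block~$1$ (weighted by $a^{(1)}$) and the $e_l, d_{1;l}, d_{2;l}, d_{3;l}$ pieces of block~$2$ (weighted by $a^{(2)}$). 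I expect the main obstacle to be the bookkeeping here: one must verify that these four contributions cancel in two pairs — displacement against shear ($e$ with $d_3$) and slope against moment ($d_1$ with $d_2$) — and this cancellation is exactly where the piecewise coefficients enter. The unweighted conditions $e_r^\top\tilde v^{(1)} = e_l^\top \tilde v^{(2)}$ and $d_{1;r}^\top\tilde v^{(1)} = -d_{1;l}^\top\tilde v^{(2)}$ must combine with the weighted flux conditions $a^{(1)} d_{3;r}^\top \tilde v^{(1)} = -a^{(2)} d_{3;l}^\top \tilde v^{(2)}$ and $a^{(1)} d_{2;r}^\top \tilde v^{(1)} = -a^{(2)} d_{2;l}^\top \tilde v^{(2)}$ so that the explicit factors of $a^{(1)}$ and $a^{(2)}$ cancel against the $A$-weighting of the boundary terms; a single sign slip would spoil the estimate. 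Once this cancellation is confirmed, $\der{}{t}E = 0$, giving conservation of a valid energy and hence stability.
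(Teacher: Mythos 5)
Your proposal is correct and follows essentially the same route as the paper's proof: apply the energy method with the $\bar H$-inner product, use the self-adjointness \eqref{eq: proj_self_adj_system} to move $P$ onto $w_t$, extract the energy $E$ from the symmetric part $N$ of $HD_4$, and cancel the remaining interface terms using $L\tilde w = L\tilde w_t = 0$. The pairwise cancellation you describe ($e$ with $d_3$, $d_1$ with $d_2$, with the $a^{(i)}$ factors absorbed by the weighted flux conditions) is exactly what the paper's equation \eqref{eq: disc_system_enest_proj} encodes, so no gap remains.
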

\begin{proof}
	The energy method applied to \eqref{eq: disc_system_ode_proj} leads to
	\begin{equation}
		(w_t,(B \otimes I_m)w_{tt})_{\bar H} = -(P w_t, (A \otimes D_4) P w)_{\bar H},
	\end{equation}
	where the self-adjoint property \eqref{eq: proj_self_adj_system} is used. Let $\tilde w = \begin{bmatrix}
			\tilde v^{(1)} \\ \tilde v^{(2)}
		\end{bmatrix} = P w$ denote the projected solution and use Definition \ref{def: D4_def}, we get
	\begin{equation}
		\label{eq: disc_system_enest_proj}
		\begin{aligned}
			\der{}{t} E & = -2 a^{(1)}(e_r^\top \tilde v^{(1)}_t d_{3;r}^\top \tilde v^{(1)} - d_{1;r}^\top \tilde v^{(1)}_t d_{2;r}^\top \tilde v^{(1)})  \\
			            & - 2 a^{(2)} (e_l^\top \tilde v^{(2)}_t d_{3;l}^\top \tilde v^{(2)} + d_{1;l}^\top \tilde v^{(2)}_t d_{2;l}^\top \tilde v^{(2)}),
		\end{aligned}
	\end{equation}{}
	where
	\begin{equation}
		E = b^{(1)} \snorm{v^{(1)}_t}_H + b^{(2)} \snorm{v^{(2)}_t}_H + a^{(1)} (\tilde v^{(1)})^\top N \tilde v^{(1)} + a^{(2)} (\tilde v^{(2)})^\top N \tilde v^{(2)},
	\end{equation}
	defines an energy. Since $L \tilde w = L P w = 0$, i.e. the interface conditions \eqref{eq: disc_inter_conds} are fulfilled for $\tilde v^{(1)}$ and $\tilde v^{(2)}$, the terms in the right-hand side of \eqref{eq: disc_system_enest_proj} cancel and we get
	\begin{equation}
		\der{}{t} E = 0.
	\end{equation}
\end{proof}
\subsection{Interface conditions with hybrid SAT and projection}
\label{sec: proj_multiblock_hybrid}
A consistent semi-discrete approximation of \eqref{eq: cont_piece_system} with the first two interface conditions in \eqref{eq: cont_int_cond} imposed using the projection method and last two using the SAT method is given by
\begin{equation}
	\label{eq: disc_system_ode_projhybrid}
	(B \otimes I_m) w_{tt} = -P (A \otimes D_4) P w + SAT,
\end{equation}
where $SAT = SAT_1 + SAT_2$,
\begin{equation}
	\label{eq: disc_projhybrid_sats}
	\begin{aligned}
		SAT_1 & = P \begin{bmatrix}
			-\frac{a^{(1)}}{2} \Hi d_{1;r} d_{2;r}^\top & -\frac{a^{(2)}}{2} \Hi d_{1;r} d_{2;l}^\top \\
			\frac{a^{(1)}}{2} \Hi d_{1;l} d_{2;r}^\top  & \frac{a^{(2)}}{2} \Hi d_{1;l} d_{2;l}^\top
		\end{bmatrix} P w, \\
		SAT_2 & = P \begin{bmatrix}
			\frac{a^{(1)}}{2} \Hi e_r d_{3;r}^\top & \frac{a^{(2)}}{2} \Hi e_r d_{3;l}^\top \\
			\frac{a^{(1)}}{2} \Hi e_l d_{3;r}^\top & \frac{a^{(2)}}{2} \Hi e_l d_{3;l}^\top
		\end{bmatrix} P w.
	\end{aligned}
\end{equation}
The projection operator is given by
\begin{equation}
	\label{eq: disc_syst_Phyb}
	P = I_{2m} - \bar H^{-1} L^\top (L \bar H^{-1} L^\top)^{-1} L,
\end{equation}
where
\begin{equation}
	L = \begin{bmatrix}
		e_r^\top     & - e_l^\top   \\
		d_{1;r}^\top & d_{1;l}^\top
	\end{bmatrix}.
\end{equation}
Note that since $P$ appears in \eqref{eq: disc_projhybrid_sats} the SAT are weakly imposing the interface conditions in the subspace of projected solutions. The following Lemma is one of the main results of this paper:
\begin{lemma}
	The system \eqref{eq: disc_system_ode_projhybrid} is a stable approximation of the DBE with piecewise constant parameters.
\end{lemma}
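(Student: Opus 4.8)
The plan is to fuse the energy argument of the pure-projection proof in Section \ref{sec: proj_multiblock_proj} with the SAT bookkeeping of Section \ref{sec: proj_multiblock_sat}, exploiting that here the projection $P$ enforces only the first two interface conditions while the SAT terms weakly impose the last two. First I would apply the energy method: take the $\bar H$ inner product of \eqref{eq: disc_system_ode_projhybrid} with $w_t$. Using the self-adjointness \eqref{eq: proj_self_adj_system} of $P$ and the fact that $P$ is time-independent (so $P w_t = \tilde w_t$ with $\tilde w = P w$), the left-hand side becomes $\frac{1}{2}\der{}{t}(b^{(1)} \snorm{v^{(1)}_t}_H + b^{(2)} \snorm{v^{(2)}_t}_H)$ and the bulk term collapses to $-(\tilde w_t, (A \otimes D_4) \tilde w)_{\bar H}$, exactly as in Section \ref{sec: proj_multiblock_proj}.

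Next I would expand the bulk term using Definition \ref{def: D4_def}. The symmetric part $N$ contributes $\frac{1}{2}\der{}{t}(a^{(1)} (\tilde v^{(1)})^\top N \tilde v^{(1)} + a^{(2)} (\tilde v^{(2)})^\top N \tilde v^{(2)})$, while the boundary-closure part produces the interface terms of \eqref{eq: disc_system_enest_proj}. The \emph{crucial} difference from the pure-projection case is that $L$ now contains only the $e_r^\top,\,-e_l^\top$ and $d_{1;r}^\top,\,d_{1;l}^\top$ rows, so $L \tilde w = 0$ yields only $e_r^\top \tilde v^{(1)} = e_l^\top \tilde v^{(2)}$ and $d_{1;r}^\top \tilde v^{(1)} = -d_{1;l}^\top \tilde v^{(2)}$ (and likewise for the time derivatives). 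Substituting these two identities collapses the four interface terms of \eqref{eq: disc_system_enest_proj} into two residual terms, each weighted by one of the combinations $a^{(1)} d_{2;r}^\top \tilde v^{(1)} + a^{(2)} d_{2;l}^\top \tilde v^{(2)}$ and $a^{(1)} d_{3;r}^\top \tilde v^{(1)} + a^{(2)} d_{3;l}^\top \tilde v^{(2)}$ --- precisely the third and fourth interface conditions, which $P$ does not enforce.

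It then remains to show that the SAT exactly cancels these two residuals. I would compute $(w_t, SAT)_{\bar H}$ block by block: since each $SAT_i$ carries $P$ on both sides, self-adjointness again replaces $w,\,w_t$ by $\tilde w,\,\tilde w_t$, and the factor $\Hi$ cancels against $\bar H$. Using the same two projected identities, $SAT_1$ contributes $-\,d_{1;r}^\top \tilde v^{(1)}_t\,(a^{(1)} d_{2;r}^\top \tilde v^{(1)} + a^{(2)} d_{2;l}^\top \tilde v^{(2)})$ and $SAT_2$ contributes $e_r^\top \tilde v^{(1)}_t\,(a^{(1)} d_{3;r}^\top \tilde v^{(1)} + a^{(2)} d_{3;l}^\top \tilde v^{(2)})$, which are exactly the negatives of the two residual interface terms. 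Hence all interface contributions cancel and $\der{}{t} E = 0$, with
\[ E = b^{(1)} \snorm{v^{(1)}_t}_H + b^{(2)} \snorm{v^{(2)}_t}_H + a^{(1)} (\tilde v^{(1)})^\top N \tilde v^{(1)} + a^{(2)} (\tilde v^{(2)})^\top N \tilde v^{(2)}. \]

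Finally, $E$ is positive semi-definite because $H > 0$ and $N \geq 0$ by Definition \ref{def: D4_def}; in contrast to the pure-SAT case no Sylvester conditions or tuning of free parameters are required, since the $\pm a^{(i)}/2$ weights in \eqref{eq: disc_projhybrid_sats} are exactly those making the cancellation work. The main obstacle I anticipate is the interface bookkeeping in the middle step: correctly identifying which projected quantities vanish (the $e$ and $d_1$ combinations) versus which survive (the $d_2$ and $d_3$ combinations), and then matching each surviving term one-to-one against the SAT contributions. The signs and the halving in the SAT blocks must line up precisely, and this is where a sign slip would most easily creep in.
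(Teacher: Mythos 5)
Your proposal is correct and follows essentially the same energy argument as the paper: the same energy $E$, the same use of the self-adjointness \eqref{eq: proj_self_adj_system} and of the two projected identities $e_r^\top \tilde v^{(1)} = e_l^\top \tilde v^{(2)}$ and $d_{1;r}^\top \tilde v^{(1)} = -d_{1;l}^\top \tilde v^{(2)}$, and the same cancellation between the $D_4$ boundary closure and the SAT. The only (cosmetic) difference is the grouping of the interface terms: the paper factors the combined boundary-plus-SAT expression by the residuals of the projection-enforced conditions, which vanish, whereas you first apply those identities and then match the surviving flux-residual combinations one-to-one against the SAT contributions; both orderings yield $\der{}{t} E = 0$.
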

\begin{proof}
	The energy method applied to \eqref{eq: disc_system_ode_projhybrid} leads to
	\begin{equation}
		(w_t,(B \otimes I_m)w_{tt})_{\bar H} = -(w_t, P (A \otimes D_4) P w + SAT)_{\bar H}.
	\end{equation}
	Let $\tilde w = \begin{bmatrix}
			\tilde v^{(1)} \\ \tilde v^{(2)}
		\end{bmatrix} = P w$ denote the projected solution. Using the self-adjoint property \eqref{eq: proj_self_adj_system} and Definition \ref{def: D4_def} results in
	\begin{equation}
		\label{eq: disc_syst_projhyb_enest}
		\begin{alignedat}{2}
			\der{}{t} E &= a^{(1)} (e_l^\top \tilde v_t^{(2)} - e_r^\top \tilde v_t^{(1)}) d_{3;r}^\top \tilde v^{(1)} &&+ a^{(1)} (d_{1;l}^\top \tilde v_t^{(2)} + d_{1;r}^\top \tilde v_t^{(1)}) d_{2;r}^\top \tilde v^{(1)} \\
			&- a^{(2)} (e_l^\top \tilde v^{(2)}_t - e_r^\top \tilde v^{(1)}_t) d_{3;l}^\top \tilde v^{(2)} &&- a^{(2)} (d_{1;l}^\top \tilde v_t^{(2)} + d_{1;r}^\top \tilde v_t^{(1)}) d_{2;l}^\top \tilde v^{(2)}
		\end{alignedat}
	\end{equation}
	where
	\begin{equation}
		E = b^{(1)} \snorm{v^{(1)}_t}_H + b^{(2)} \snorm{v^{(2)}_t}_H + a^{(1)} (\tilde v^{(1)})^\top N \tilde v^{(1)} + a^{(2)} (\tilde v^{(2)})^\top N \tilde v^{(2)},
	\end{equation}
	defines an energy. Since $L \tilde w = L P v = 0$ we have $e_r^\top \tilde v^{(1)} = e_l^\top \tilde v^{(2)}$ and $d_{1;r}^\top \tilde v^{(1)} = -d_{1;l}^\top \tilde v^{(2)}$. Substituted into \eqref{eq: disc_syst_projhyb_enest} gives
	\begin{equation}
		\der{}{t} E = 0.
	\end{equation}
\end{proof}
\section{Time integration}
\label{sec: time_int}
All methods considered in this paper lead to a system of ODE:s (with $m$ and $2m$ unknowns for the single- and dual-block problems respectively) on the form
\begin{equation}
	\begin{aligned}
		v_{tt} & = D v, \quad &  & t \geq 0 \\
		v(t)   & = f_1,       &  & t = 0,   \\
		v_t(t) & = f_2,       &  & t = 0,
	\end{aligned}
\end{equation}
where $D$ is a matrix with real non-positive eigenvalues. To integrate the system in time we use a 4th order accurate method given by
\begin{equation}
	\begin{aligned}
		v^{(0)}     & = f_1,                                                     \\
		v^{(1)}     & = (I + \frac{k^2}{2} D) f_1 + k (I + \frac{k^2}{6} D) f_2, \\
		v^{(n+1)} & = (2I + k^2 D + \frac{k^4}{12} D D) v^{(n)} - v^{(n-1)},
	\end{aligned}
\end{equation}
where $k$ is the temporal step size. See \cite{Mattsson2006,Mattsson2015} for more details on this type of time-stepping methods. It is easily shown that stability is obtained if $k$ satisfies
\begin{equation}
	k^2 \rho(D) < 12,
\end{equation}
where $\rho(D)$ is the spectral radius of $D$. Introducing the undivided matrix $\tilde D = h^4 D$ we get the CFL condition
\begin{equation}
	k < \sqrt{\frac{12}{\rho(\tilde D)}} h^2.
\end{equation}
The spectral radius of the undivided operator $\tilde D$ is dependent on the spatial discretization but not on $h$ (for large enough problems). Thus, if the time step is chosen as a fraction of the stability limit, the theoretical execution time for a given problem size is proportional to $\sqrt{\rho(\tilde D)}$.
\section{Numerical experiments}
\label{sec: num_exp}
In this section we numerically verify and compare the methods. We evaluate the methods in terms of error convergence and theoretical execution time measured as the spectral radius of the undivided spatial operator (as outlined in Section \ref{sec: time_int}).
\subsection{Analytical solutions}
To measure the error and convergence of the methods we use standing waves solutions derived using separation of variables. The form of the analytical solutions are given by
\begin{equation}
	\label{eq: an_sol_form}
	u(x,t) = T(t) X(x),
\end{equation}
where
\begin{equation}
	T(t) = \cos(\beta^2 \sqrt{\frac{a}{b}} t) \quad \text{and}
\end{equation}
\begin{equation}
	X(x) = A_1 \cosh(\beta x) + A_ 2\cos(\beta x) + A_3 \sin(\beta x) + A_4 \sinh(\beta x).
\end{equation}
The constants $A_{1,2,3,4}$ and $\beta$ are found by imposing the boundary or interface conditions and solving the resulting non-linear system of equations numerically. For the multiblock problem the solutions in each block are given by \eqref{eq: an_sol_form} with separate parameters.

To evaluate the boundary treatments homogeneous beams with clamped and free homogeneous boundary conditions are considered. For these problems we choose $b = a = 1$ and the domain $x \in [0,1]$. The interface treatments are evaluated by considering a periodic domain $x \in [-1,1]$ with a parameter discontinuity at $x = 0$ and $x = \pm 1$ (two interface couplings). The parameters are chosen to $a^{(1)} = 1$, $a^{(2)} = 4$ and $b^{(1)} = b^{(2)} = 1$. The solutions at $t = 0$ (initial data) are plotted in Figure \ref{fig: init_data}. In \ref{appendix: an_sols} the parameters $A_{1,2,3,4}$ and $\beta$ for the homogeneous beam with clamped and free boundary conditions and parameters $A^{(1,2)}_{1,2,3,4}$ and $\beta^{(1,2)}$ for the piecewise homogeneous beam are presented.

The error is calculated as
\begin{equation}
	\epsilon^{(m)} = \norm{u - v^{(m)}}_h,
\end{equation}
where $u$ is the analytical solution, $v^{(m)}$ the approximate solution with $m$ grid points and $\norm{\cdot}_h$ the discrete $L^2$-norm. The time step is chosen as the stability requirement divided by two. The errors are measured at $t = 1$.
\begin{figure}
	\centering
	\includegraphics[width=1\textwidth]{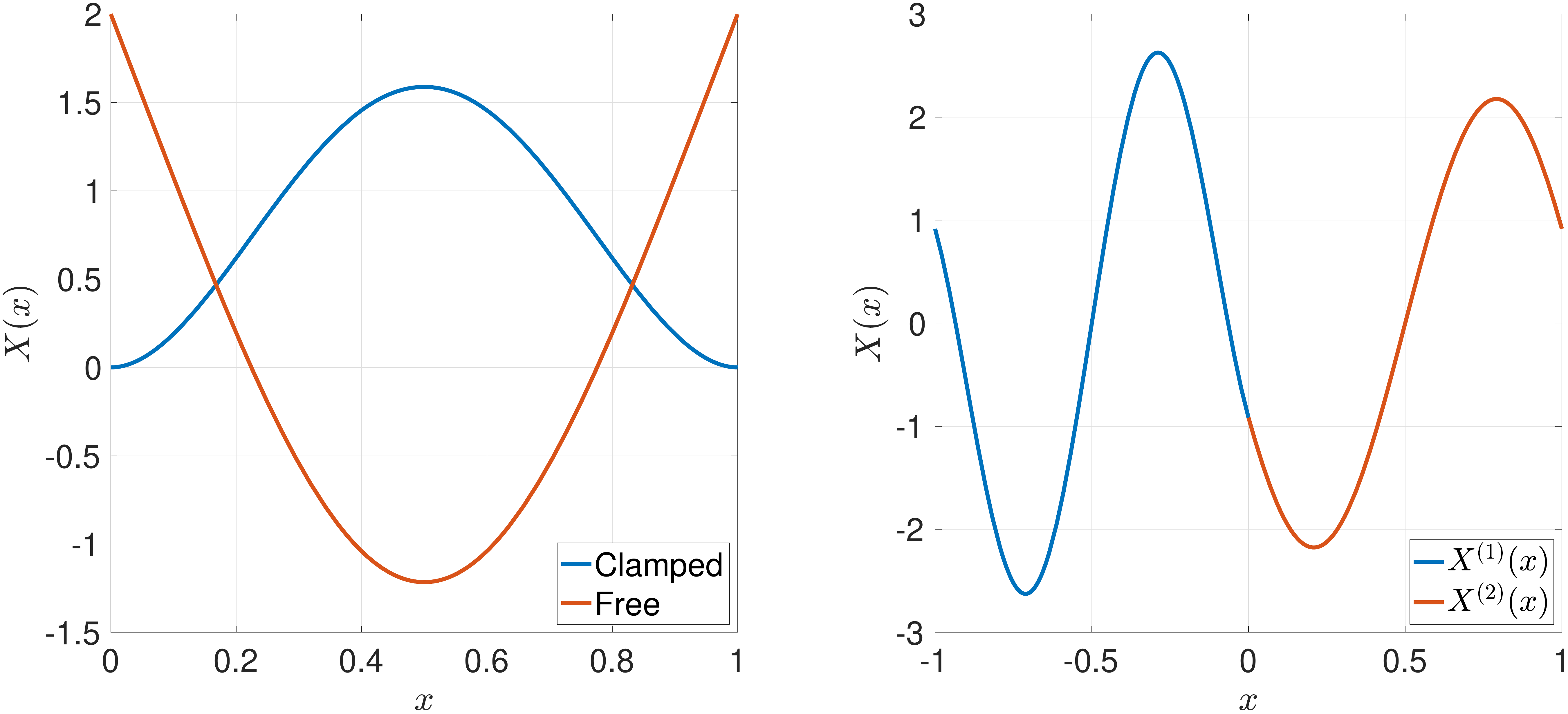}
	\caption{Left: Initial functions $X(x)$ for the homogeneous beam problem with clamped and free boundary conditions. Right: Initial functions $X^{(1,2)}(x)$ for the piecewise homogeneous beam problem with interface conditions at $x = 0$ and $x = \pm 1$.}
	\label{fig: init_data}
\end{figure}
\subsection{Choice of \texorpdfstring{$\alpha_{II}$}{alpha2} and \texorpdfstring{$\alpha_{III}$}{alpha3} for SAT}
When imposing clamped boundary conditions with SAT or interface conditions with only SAT one has to decide how to choose the parameters $\alpha_{II}$ and $\alpha_{III}$, see Sections \ref{sec: sat_single_clamped} and \ref{sec: proj_multiblock_sat}. In this section we investigate the influence of these parameters on the error and spectral radius (theoretical execution time, see Section \ref{sec: time_int}) for the homogeneous DBE with clamped boundary conditions.
\begin{figure}[htbp]
	\begin{subfigure}{\textwidth}
		\centering
		\includegraphics[width=0.82\textwidth]{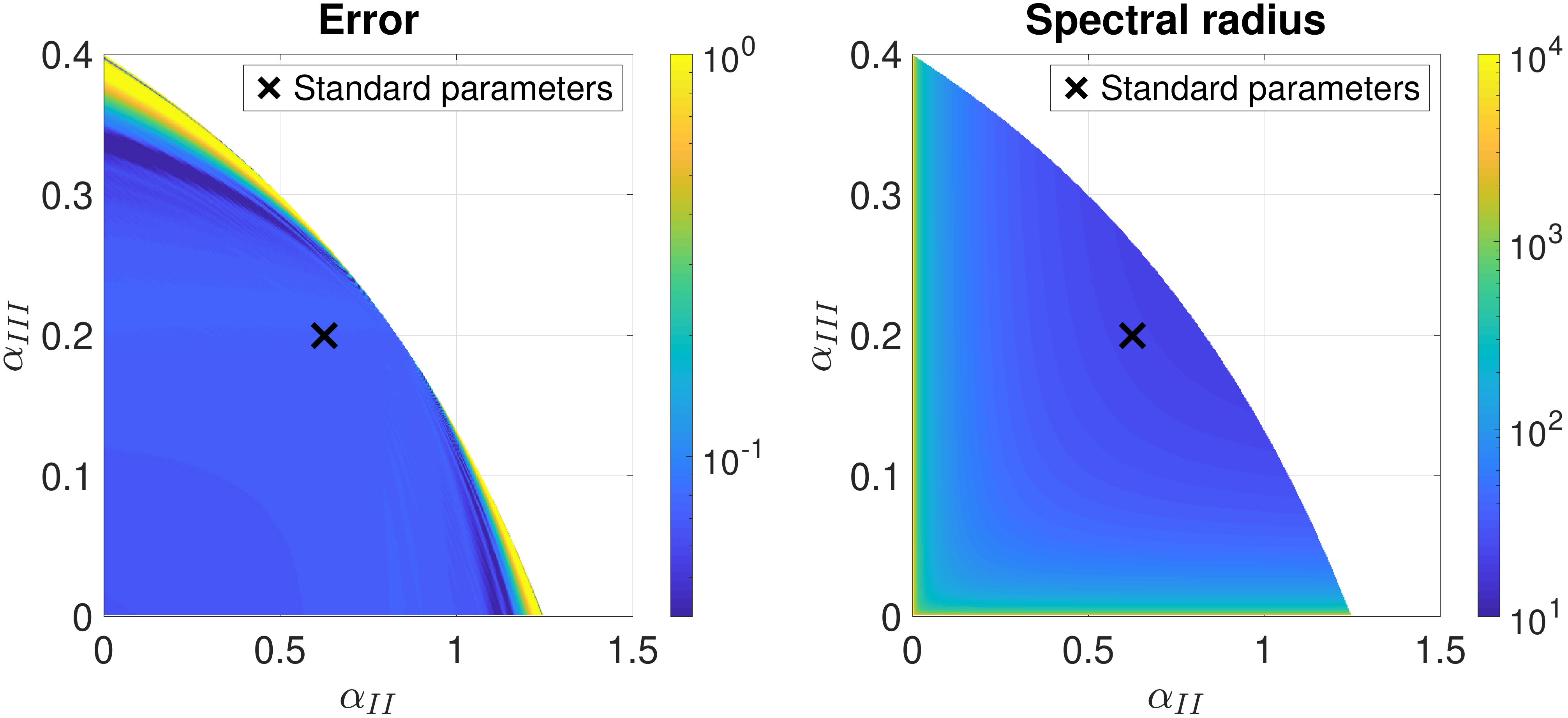}
		\caption{2nd order operators.}
		\label{fig: error_spec_alphas_o2}
	\end{subfigure}
	\newline
	\begin{subfigure}{\textwidth}
		\centering
		\includegraphics[width=0.82\textwidth]{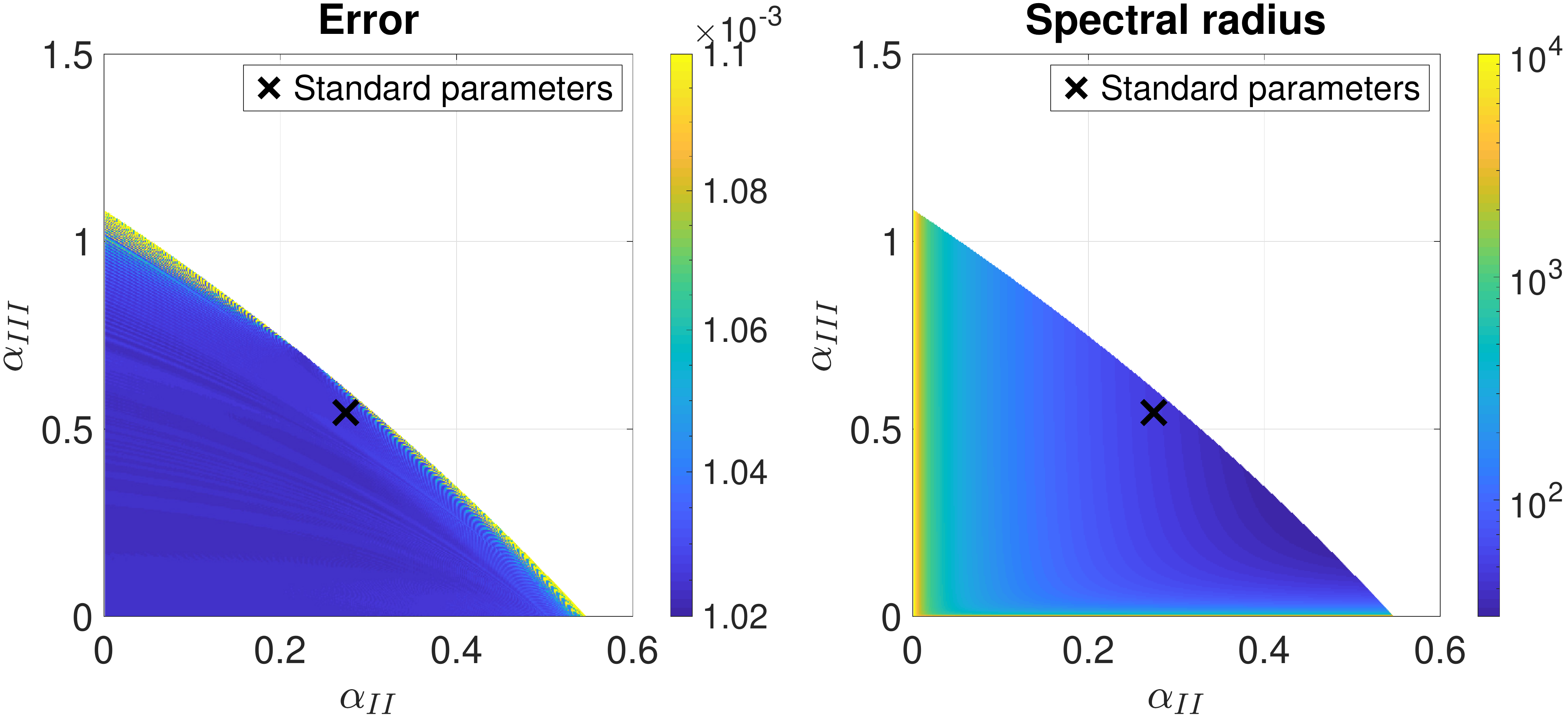}
		\caption{4th order operators.}
		\label{fig: error_spec_alphas_o4}
	\end{subfigure}
	\newline
	\begin{subfigure}{\textwidth}
		\centering
		\includegraphics[width=0.82\textwidth]{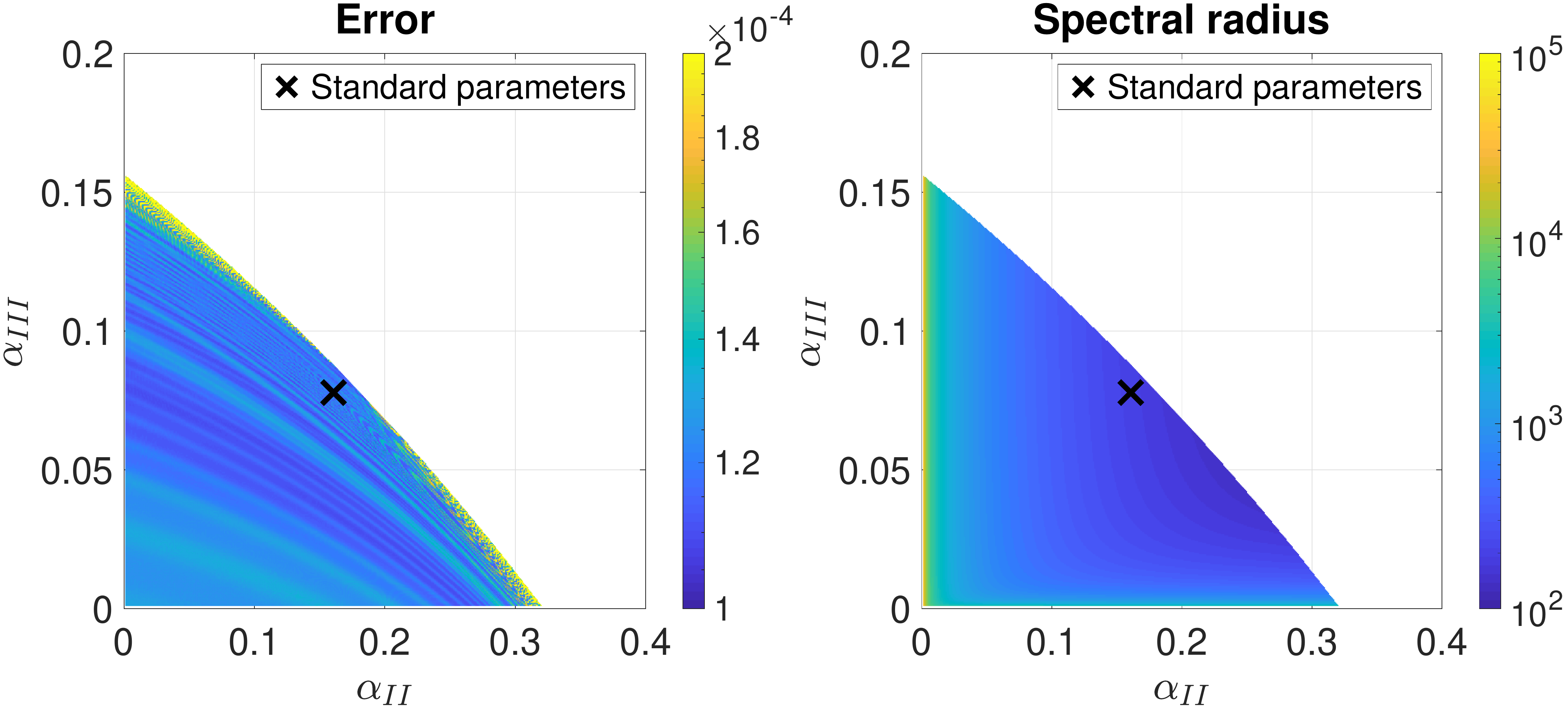}
		\caption{6th order operators.}
		\label{fig: error_spec_alphas_o6}
	\end{subfigure}
	\caption{Error and undivided spectral radius with stable $\alpha_{II}$ and $\alpha_{III}$ for the 2nd, 4th and 6th order SBP operators applied to homogeneous beam problem with clamped boundary conditions. The standard parameters are indicated.}
	\label{fig: error_spec_alphas}
\end{figure}

In Figure \ref{fig: error_spec_alphas} the error and undivided spectral radius for stable choices of $\alpha_{II}$ and $\alpha_{III}$ are plotted for the 2nd, 4th and 6th order operators with $m = 21$ . For all orders the spectral radius is greatest for small $\alpha_{II,III}$. This is expected since it leads to SAT with larger magnitude, and thus the boundary conditions are imposed more strongly.

As can be seen in the error plots of all orders, the smallest spectral radius leads to a relatively large error and, in some cases, the smallest error leads to the largest spectral radius. It is not clear whether or not an optimal choice exists. In this paper we use the parameters derived in \cite{Mattsson2015}, given by the largest $\alpha_{II}$ and $\alpha_{III}$ such that
\begin{equation}
	\begin{aligned}
		 & \frac{N}{2} - h \alpha_{II} (d_{2;l} d_{2;l}^\top + d_{2;r} d_{2;r}^\top) \geq 0 \quad \text{and} \\
		 & \frac{N}{2} - h^3 \alpha_{III} (d_{3;l} d_{3;l}^\top + d_{3;r} d_{3;r}^\top) \geq 0.
	\end{aligned}
\end{equation}
These parameters, here referred to as \emph{standard}, are determined numerically and presented in Table \ref{tbl: alphas} and Figure \ref{fig: error_spec_alphas}. Clearly, for this problem the standard parameters are not optimal. However, from extensive testing we have found that this choice leads to a reliable balance between error and spectral radius. Furthermore, computing the standard parameters is cheap and problem independent. Performing a parameter sweep such as the one used to generate Figure \ref{fig: error_spec_alphas} is highly expensive and not feasible for individual problems. In the present study we settle with using the standard parameters for the purpose of comparing SAT to projection. The question of optimal parameter choices is an interesting topic for future research.
\begin{table}
	\centering
	\caption{Standard $\alpha_{II}$ and $\alpha_{III}$ for the 2nd, 4th and 6th order operators.}
	\label{tbl: alphas}
	\begin{tabular}{ |c|c|c|c| }
		\hline
		Parameter      & 2nd order & 4th order & 6th   \\
		\hline
		$\alpha_{II}$  & 0.625     & 0.274     & 0.161 \\
		\hline
		$\alpha_{III}$ & 0.200     & 0.544     & 0.078 \\
		\hline
	\end{tabular}
\end{table}
\subsection{Spectral radius}
In this section we present and compare the spectral radii of the spatial operators for the homogeneous and piecewise homogeneous beam problems.

In Table \ref{tbl: hom_spectral_rad} the undivided spectral radii for the homogeneous beam problem with clamped and free boundary conditions imposed using SAT and projection are presented. The results show that the projection method leads to a smaller spectral radius, thus allowing for a larger time step, compared to SAT. Note also that the spectral radius with boundary conditions imposed using projection is independent of boundary conditions, whereas the clamped boundary conditions with SAT yields a significantly larger spectral radius compared to free boundary conditions with SAT.

In Table \ref{tbl: p_hom_spectral_rad} the undivided spectral radii for the piecewise homogeneous beam problem with interface conditions imposed using SAT, projection and the hybrid method are shown. For the 2nd and 4th order operators, the spectral radius is independent of method. For the 6th order operators the projection method yields the smallest spectral radius and SAT the largest.
\begin{table}[!htbp]
	\centering
	\caption{Spectral radius of undivided right-hand side matrix $\tilde D$ for homogeneous beam with clamped and free boundary conditions imposed using SAT and projection.}
	\label{tbl: hom_spectral_rad}
	\begin{tabular}{|c|c|c|c|c|}\hline
		BC                       & Method     & 2nd order & 4th order & 6th order \\
		\hline
		\multirow{2}{*}{Clamped} & SAT        & 22.4651   & 49.8208   & 202.8492  \\
		                         & Projection & 16.0000   & 26.6666   & 34.1333   \\
		\hline
		\multirow{2}{*}{Free}    & SAT        & 16.0000   & 28.3942   & 84.0057   \\
		                         & Projection & 16.0000   & 26.6666   & 34.1333   \\
		\hline
	\end{tabular}
\end{table}
\begin{table}[!htbp]
	\centering
	\caption{Spectral radius of undivided right-hand side matrix $\tilde D$ for piecewise homogeneous beam with interface conditions imposed using SAT, projection and hybrid.}
	\label{tbl: p_hom_spectral_rad}
	\begin{tabular}{|c|c|c|c|}\hline
		Method     & 2nd order & 4th order & 6th order \\
		\hline
		SAT        & 64.1945   & 106.6666  & 367.1694  \\
		Projection & 64.0000   & 106.6666  & 136.5332  \\
		Hybrid     & 64.0000   & 106.6666  & 193.7828  \\
		\hline
	\end{tabular}
\end{table}
\subsection{Error convergence}
\label{sec: results_convergence}
In this section we present accuracy and convergence results for the homogeneous and piecewise homogeneous beam problems. The theoretical rate of convergence for this problem is $\min(2p,r+4)$, where $2p$ is the internal stencil order and $r$ the boundary closure order \cite{Svard2019}. For the 2nd, 4th and 6th order SBP operators used here we have $p = 1,2,3$ and $r = -2, 0, 1$ (see \cite{Mattsson2014} for more details). Thus, the theoretical convergence rates are 2, 4 and 5 for the 2nd, 4th and 6th order SBP operators respectively.

In Figure \ref{fig: single_block_conv} the errors of the homogeneous beam problem with clamped and free boundary conditions imposed using SAT and projection are plotted for the 2nd, 4th and 6th order operators. The plots indicate good agreement with the theoretical convergence expectations for large step sizes. The roundoff errors are dominant with the 4th and 6th order operators for approximately $h < 5 \cdot 10^{-3}$. The results show that the error with SAT and projection are comparable for clamped boundary condition, and that SAT is slightly more accurate for free boundary conditions.

In Figure \ref{fig: multi_block_conv} we compare the errors for the piecewise homogeneous beam with interface conditions imposed using SAT, projection and the hybrid method. The results show that all three methods for imposing interface conditions are comparable in terms of error and that the expected convergence rates with all operators are obtained for large $h$.
\begin{figure}
	\centering
	\includegraphics[width=1\textwidth]{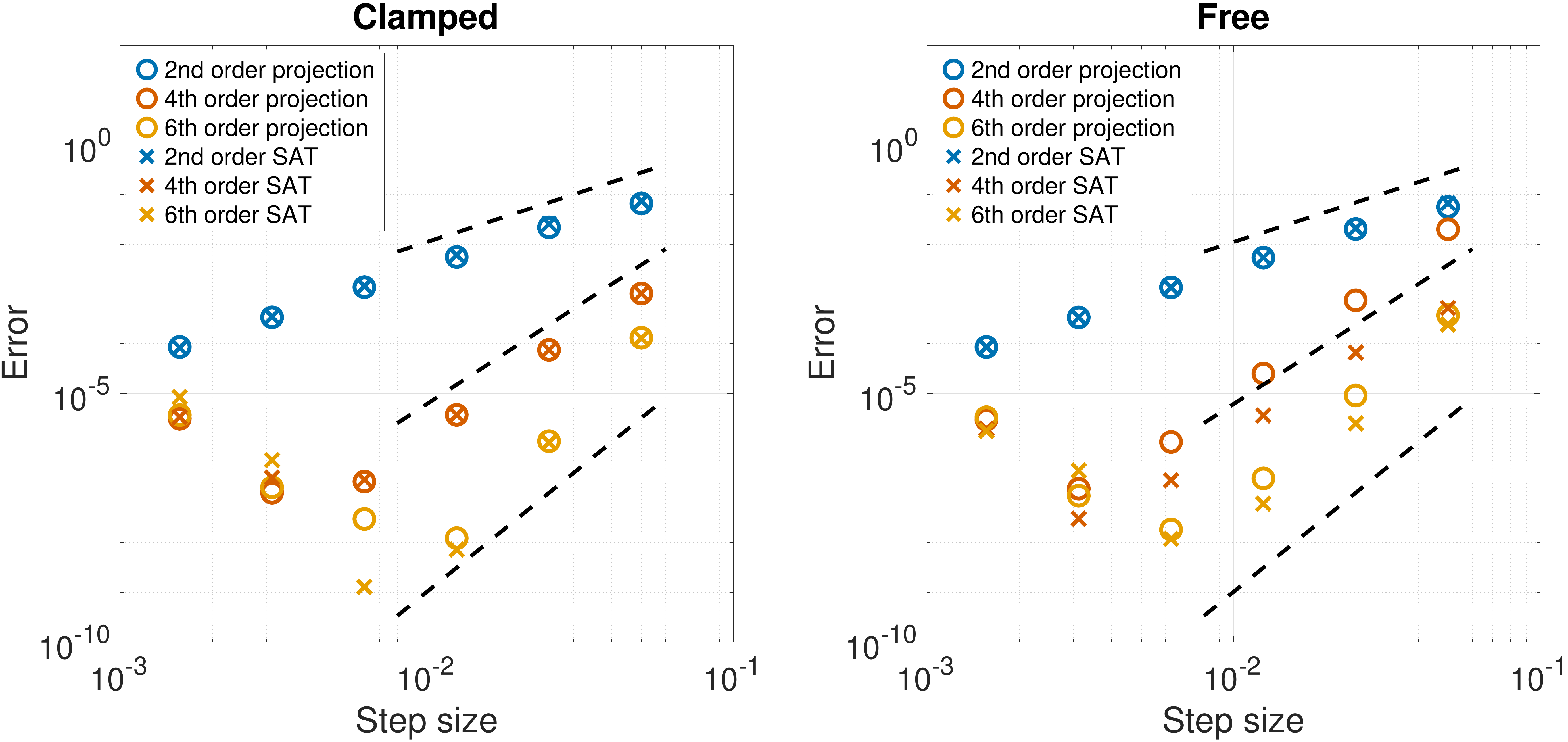}
	\caption{Error as a function of step size for a homogeneous beam with clamped and free boundary conditions imposed using SAT and projection. Results with 2nd, 4th and 6th order SBP operators are presented. The dashed lines indicate the theoretical convergence rates 2, 4 and 5.}
	\label{fig: single_block_conv}
\end{figure}
\begin{figure}
	\centering
	\includegraphics[width=0.48\textwidth]{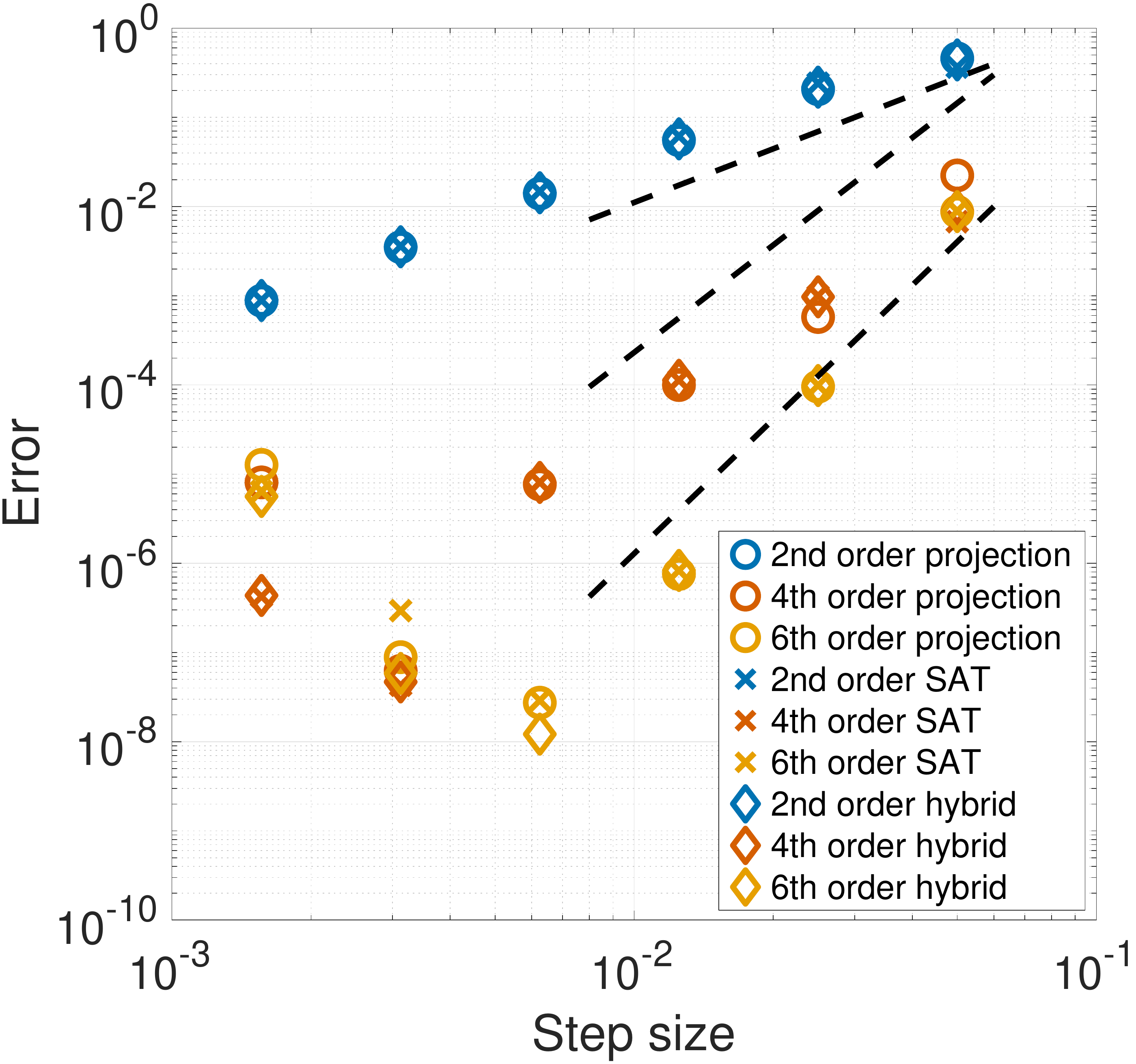}
	\caption{Error as a function of step size for circular piecewise homogeneous beam imposed using SAT, projection and hybrid method. Results with 2nd, 4th and 6th order SBP operators are presented. The dashed lines indicate the theoretical convergence rates 2, 4 and 5.}
	\label{fig: multi_block_conv}
\end{figure}
\section{Conclusions}
In this paper we have studied boundary treatments for the dynamic beam equation (DBE) using summation-by-parts finite differences (SBP-FD). We have compared the simultaneous approximation term (SAT) method to the projection method in terms of accuracy and efficiency for the DBE with clamped and free boundary conditions. We have also studied the DBE with piecewise constant parameters, where the discontinuities necessitate the use of internal boundaries. Novel SAT and projection methods for imposing the interface conditions are derived. Additionally, we present a novel hybrid method combining SAT and projection for interface conditions. The accuracy and efficiency of the methods are compared numerically.

Numerical experiments with specific SBP-FD operators show that SAT can be more accurate than projection for free boundary conditions, otherwise all methods considered are comparable in terms of accuracy. The projection method is found to yield the smallest spectral radius for all problems considered, thus leading to the least restrictive time step requirement for explicit time integration methods.

Awaiting further theoretical developments all methods are roughly equal and the choice between them is largely a matter of taste. However, we emphasize the difference between SAT and projection when proving stability of SBP discretizations. With projection the proofs closely mimic the continuous counterparts. One may argue that, for the problems considered here, the derivation of a semi-discrete energy estimate is redundant since it directly follows from the continuous proof if SBP-FD operators are used together with projection. With SAT the proofs are more involved, in particularly for clamped boundary conditions and interface conditions since these involve a decomposition of the fourth derivative operator and tuning of multiple parameters. The hybrid method for interface conditions avoids these problems but, as the numerical experiments show, it is not more accurate nor more efficient than the projection method.
\label{sec: concs}
\newpage
\appendix
\section{Parameters of analytical solutions}
\label{appendix: an_sols}

\begin{table}[!htbp]
	\centering
	\caption{Parameters for analytical solutions \eqref{eq: an_sol_form} of homogeneous beam with clamped and free boundary conditions.}
	\label{tbl: hom_params}
	\begin{tabular}{|c|c|c|}
		\hline
		        & Clamped            & Free               \\
		\hline
		$\beta$ & 4.730040744862704  & 4.730040744862704  \\
		$A_1$   & 1.0                & 1.0                \\
		$A_2$   & -1.0               & 1.0                \\
		$A_3$   & -0.982502214576238 & -0.982502214576238 \\
		$A_4$   & 0.982502214576238  & -0.982502214576238 \\
		\hline
	\end{tabular}
\end{table}
\begin{table}[!htbp]
	\centering
	\caption{Parameters for analytical solutions \eqref{eq: an_sol_form} of piecewise homogeneous beam coupled as a ring.}
	\label{tbl: inhom_params}
	\begin{tabular}{|c|c|}
		\hline
		              & Interface           \\
		\hline
		$\beta^{(1)}$ & -7.5615808319278868 \\
		$A_1^{(1)}$   & 0.6114706121627287  \\
		$A_2^{(1)}$   & -1.5276874719563509 \\
		$A_3^{(1)}$   & 2.0552837081134849  \\
		$A_4^{(1)}$   & -0.6121069366992447 \\
		\hline
		$\beta^{(2)}$ & -5.3468450827464249 \\
		$A_1^{(2)}$   & 0.0766810911329587  \\
		$A_2^{(2)}$   & -0.9928979509265810 \\
		$A_3^{(2)}$   & 1.9635450880282364  \\
		$A_4^{(2)}$   & 0.0774150750075980  \\
		\hline
	\end{tabular}
\end{table}

\newpage
\bibliographystyle{elsarticle-num}
\bibliography{references}

\end{document}